\newcommand{\MATLAB}{\textsc{Matlab}\xspace}
\theoremstyle{definition}
\newtheorem{thm}{Theorem}
\newtheorem{pre}[thm]{Proposition}
\newtheorem{defn}{Definition}%
\newtheorem{exa}{Example}
\newcommand{\cX}{\underline{\bf X}}
\newcommand{\cR}{\underline{\bf R}}
\newcommand{\cQ}{\underline{\bf Q}}
\newcommand{\cY}{\underline{\bf Y}}
\newcommand{\cU}{\underline{\bf U}}
\newcommand{\cV}{\underline{\bf V}}
\newcommand{\cS}{\underline{\bf S}}
\newcommand{\X}{{\bf X}}
\newcommand{\I}{{\bf I}}
\newcommand{\B}{{\bf B}}
\newcommand{\U}{{\bf U}}
\newcommand{\tS}{{\bf S}}
\newcommand{\V}{{\bf V}}
\newcommand{\Ome}{\underline{\bf \Omega}}
\journal{Journal of Scientific Computing}
\begin{document}

\begin{frontmatter}



\title{A Randomized Algorithm for Tensor Singular Value
Decomposition Using an Arbitrary Number of
Passes}

\author[label1]{Salman Ahmadi-Asl}
\author[label1]{Anh-Huy Phan}
\author[label_2]{Andrzej Cichocki}
\affiliation[label1]{organization={Center for Artificial Intelligence Technology, Skolkovo Institute of Science and Technology,\,Moscow, \,Russia\,s.asl@skoltech.ru},
   }
\affiliation[label_2]{organization={Systems Research Institute of Polish Academy of Science, Warsaw, Poland}
            }





\begin{abstract}
Efficient  and fast computation of a tensor singular value decomposition (t-SVD) with a few passes over the underlying data tensor is crucial because of its many potential applications. The current/existing subspace randomized algorithms need $(2q+2)$ passes over the data tensor to compute a t-SVD, where $q$ is a non-negative integer number (power iteration parameter). In this paper, we propose an efficient and flexible randomized algorithm that can handle any number of passes $q$, which not necessary need be even.
The flexibility of the proposed algorithm in using fewer passes naturally leads to lower computational and communication costs. This advantage makes it particularly appropriate when our task calls for several tensor decompositions or when the data tensors are huge. 
The proposed algorithm is a generalization of the methods developed for matrices to tensors. The expected/average error bound of the proposed algorithm is derived. Extensive numerical experiments on random and real-world data sets are conducted, and the proposed algorithm is compared with some baseline algorithms. The  extensive computer simulation  experiments  demonstrate  that the proposed algorithm is practical, efficient, and in general outperforms  the state of the arts algorithms. 
We also demonstrate how to use  the  proposed method to develop a fast algorithm for the tensor completion problem.
\end{abstract}



\begin{keyword}
Tensor singular value decomposition, randomization, pass-efficient algorithms.
\MSC 15A69 \sep 46N40 \sep 15A23
\end{keyword}

\end{frontmatter}


\section{Introduction}
Multidimensional arrays or tensors are natural extensions of matrices and vectors. 
It was in 1927 when the first definition of the tensor rank was presented and the corresponding tensor decomposition termed Canonical Polyadic Decomposition (CPD) was defined \cite{hitchcock1927expression}. Later on, an alternative definition for the tensor rank (Tucker rank) was defined in 1963 \cite{tucker1963implications,tucker1964extension} and a new tensor decomposition called Tucker decomposition was proposed. The Tucker decomposition includes the CPD as a special case. Indeed, there is no unique definition for the tensor rank or the tensor decomposition. Other tensor decompositions include Tensor Train (TT) or Matrix Product State (MPS) \cite{oseledets2011tensor,ostlund1995thermodynamic}, Tensor Chain/Ring (TC) or MPS with periodic boundary conditions \cite{espig2012note,zhao2016tensor}, tensor SVD (t-SVD) \cite{kilmer2011factorization,kilmer2013third} and Hierarchical Tucker decomposition \cite{hackbusch2009new}. 

Tensors have been successfully applied in many machine learning and data analysis tasks such as data reconstruction, data compression, clustering, etc. See \cite{kolda2009tensor,cichocki2017tensor} and the references therein for a comprehensive review of such applications. One of the main challenges in this {work} is developing fast algorithms for the computation of different types of tensor decomposition. For example, to solve the tensor completion problem \cite{song2019tensor}, we usually need to compute tensor decompositions multiple times. When the data tensors are huge or many iterations are necessary for the convergence, these calculations become prohibitively expensive. Therefore, in order to be employed in real-time applications such as traffic data prediction, we need to build fast methods for various types of tensor decompositions. 

The randomization framework has been proven to be an efficient technique, for low-rank matrix computation \cite{halko2011finding} and recently  generalized to the tensors \cite{malik2018low,che2018randomized,malik2021sampling,ahmadi2021randomized}. It is known that randomized algorithms reduce the computational complexity of the deterministic counterparts and also their communication costs. The latter benefit is especially important when the data tensor is very large and stored on several machines. Here, the communication cost is the main concern and we need to access the data tensor as few {times} as possible. In the context of randomization, such methods are called randomized pass-efficient  algorithms \cite{bjarkason2019pass}. For example, in \cite{sun2020low} a pass-efficient randomized algorithm is developed for the Tucker decomposition. The standard randomized subspace iteration method needs $(2q+2)$ passes/views over the data tensor \cite{halko2011finding}, where $q$ is a power iteration parameter. In \cite{bjarkason2019pass}, the author proposed new randomized algorithms for matrices, which do not have this limitation and, for any budget of passes, can compute a low-rank matrix approximation. In this paper, we generalize this idea to the tensor case based on the tubal product (t-product) \cite{kilmer2011factorization,kilmer2013third}. The flexibility of using fewer passes leads to lower communication and computational costs and this is the key benefit of the proposed algorithm in this paper. For example, as will be shown in our experiments, for the image/video compression task, in Section \ref{Sec:Sim}, three passes provided quite good results, while the classical subspace randomized algorithm at least needs 4 passes, and this extra pass can be very expensive for data tensors stored out-of-core or when multiple low tubal rank approximations are required in our task, e.g., tensor completion. Simulations on synthetic and real-world datasets are provided to support the theoretical results. In particular, we present an application of the proposed algorithm for the image/video completion problem. 

Our principal contributions can be summarized as follows:

\begin{itemize}
    \item Developing a pass-efficient randomized algorithm for the computation of the t-SVD with an arbitrary number of passes 
    
    \item Applying the proposed algorithm to reconstruct images/videos with missing pixels
    
    \item Extensive simulation results on synthetic benchmarks and real-world datasets
\end{itemize}

The remainder of this paper is organized as follows: The preliminary concepts and definitions are introduced in Section \ref{Pre}. Section \ref{Sec:TSVD} is devoted to introducing the t-SVD and its computational procedures. The proposed approach is outlined in Section \ref{Sec_pro}. {An application of the proposed algorithm to the tensor completion problem is presented in Section \ref{Sec:APP}.} Simulation results are presented in Section \ref{Sec:Sim} and a conclusion is given in Section \ref{Sec:Conclu}.

\section{Preliminaries}\label{Pre}
Let us introduce the notations and main concepts that we need in the next sections. Tensors, matrices, and vectors are denoted by underlined bold upper case letters e.g., $\underline{\bf X}$, bold upper case letters, e.g., ${\bf X}$, and bold lower case letters, e.g., ${\bf x}$, respectively. 
Slices are produced by fixing all but two modes of a tensor. For a third-order tensor $\underline{\X}$, $\underline{\bf X}(:,:,i),\,\underline{\bf X}(:,i,:)$ and $\underline{\bf X}(i,:,:)$ are called frontal, lateral, and horizontal slices, respective. Fibers are {obtained} by fixing all but one mode. {For a third-order tensor $\underline{\X}$, $\underline{\bf X}(i,j,:)$ is called a tube.} The Frobenius and infinity norms of tensors are denoted by ${\left\|. \right\|_F}$ and $\|.\|_{\infty}$, respectively. The notation ``${\rm conj}$'' denotes the complex conjugate of a complex number or the component-wise complex conjugate of a matrix. The mathematical expectation of a random tensor $\underline{\X}$ is denoted by $\mathbb{E}(\underline{\X})$, and $\lceil n\rceil$ means the nearest integer number greater than or equal to $n$. The element-wise or Hadamard product, is denoted by ``$\oast$''. For a given data tensor $\underline{\X}\in\mathbb{R}^{I_1\times I_2\times \cdots\times I_N}$ and the indicator set $\underline{\bf \Omega}\in\mathbb{R}^{I_1\times I_2\times \cdots\times I_N}$, the projector ${\bf P}_{\underline{\bf \Omega}}$ is defined as follows
\[
{
{\bf P}_{\underline{\bf \Omega}}(\underline{\X})=
\left\{
	\begin{array}{ll}
		\underline{\X}({\bf i})  &  {\bf i}\in\underline{\bf\Omega},\\ 
		0 & {\bf i}\notin\underline{\bf\Omega},
	\end{array}
\right.}
\]
where ${\bf i}=(i_1,i_2,\ldots,i_N)$ is a multi-index and $1\leq i_n\leq I_n,\,\,n=1,2,\ldots,N$. Throughout the paper, we focus only on real and third-order tensors, however, our results can be generalized to complex higher-order tensors straightforwardly. 
 
Before presenting the t-SVD, we first need to present some basic definitions such as the t-product operation and f-diagonal tensors.

\begin{defn} ({t-product})
Let $\underline{\mathbf X}\in\mathbb{R}^{I_1\times I_2\times I_3}$ and $\underline{\mathbf Y}\in\mathbb{R}^{I_2\times I_4\times I_3}$, then the t-product $\underline{\mathbf X}*\underline{\mathbf Y}\in\mathbb{R}^{I_1\times I_4\times I_3}$ is defined as follows
\begin{equation}\label{TPROD}
\underline{\mathbf C} = \underline{\mathbf X} * \underline{\mathbf Y} = {\rm fold}\left( {{\rm circ}\left( \underline{\mathbf X} \right){\rm unfold}\left( \underline{\mathbf Y} \right)} \right),
\end{equation}
where 
\[
{\rm circ} \left(\underline{\mathbf X}\right)
=
\begin{bmatrix}
\underline{\mathbf X}(:,:,1) & \underline{\mathbf X}(:,:,I_3) & \cdots & \underline{\mathbf X}(:,:,2)\\
\underline{\mathbf X}(:,:,2) & \underline{\mathbf X}(:,:,1) & \cdots & \underline{\mathbf X}(:,:,3)\\
 \vdots & \vdots & \ddots &  \vdots \\
 \underline{\mathbf X}(:,:,I_3) & \underline{\mathbf X}(:,:,I_3-1) & \cdots & \underline{\mathbf X}(:,:,1)
\end{bmatrix},
\]
and 
\[
{\rm unfold}(\underline{\mathbf Y})=
\begin{bmatrix}
\underline{\mathbf Y}(:,:,1)\\
\underline{\mathbf Y}(:,:,2)\\
\vdots\\
\underline{\mathbf Y}(:,:,I_3)
\end{bmatrix},\hspace*{.5cm}
\underline{\mathbf Y}={\rm fold} \left({\rm unfold}\left(\underline{\mathbf Y}\right)\right).
\]
\end{defn}

It is known that the t-product operation is indeed the circular convolution operator and can be computed using the Fast Fourier Transform (FFT). To this end, the FTT operator is applied to all tubes of two tensors $\underline{\bf X},\,\underline{\bf Y}$, and obtains new tensors $\widehat{\underline{\bf X}},\,\widehat{\underline{\bf Y}}$. Then, we multiply the frontal slices of the tensors $\widehat{\underline{\bf X}},\,\widehat{\underline{\bf Y}}$ to get the new tensor $\widehat{\underline{\bf Z}}$. Finally, we apply the Inverse FFT (IFFT) operator to all tubes of the tensor $\widehat{\underline{\bf Z}}$. This procedure is summarized in Algorithm \ref{ALG:TSVDP}, which is the optimized version of the t-product, because it needs only the FFT of the $\lceil \frac{I_3+1}{2}\rceil$ first frontal slices suggested by the works \cite{hao2013facial,lu2019tensor}, while the original papers (\cite{kilmer2011factorization,kilmer2013third}) consider the FFT of all frontal slices. Note that ${\rm fft}\left( {\underline{\mathbf Z},[],3} \right)$ is equivalent to computing the FFT of all tubes of the tensor $\underline{\bf Z}$.
The t-product can be defined according to an arbitrary invertible transform \cite{kernfeld2015tensor}. For example, the unitary transform matrices were utilized in  \cite{song2020robust}, instead of discrete Fourier transform matrices. It was also proposed in \cite{jiang2020framelet} to use  non-invertible transforms instead of unitary matrices.

It can be proven that for a tensor $\X\in\mathbb{R}^{I_1\times I_2\times I_3}$, we have 
\begin{eqnarray}\label{eq_fou}
\|\underline{\X}\|^2_F=\frac{1}{I_3}\sum_{i=1}^{I_3}\|\widehat{\underline{\X}}(:,:,i)\|_F^2,
\end{eqnarray}
where $\widehat{\underline{\X}}(:,:,i)$ is the $i$-th frontal slice of the tensor $\widehat{\underline{\X}}={\rm fft}(\underline{\X},[],3)$, see \cite{lu2019tensor,zhang2018randomized}.

\RestyleAlgo{ruled}
\LinesNumbered
\begin{algorithm}
{\small
\SetKwInOut{Input}{Input}
\SetKwInOut{Output}{Output}\Input{Two data tensors $\underline{\mathbf X} \in {\mathbb{R}^{{I_1} \times {I_2} \times {I_3}}},\,\,\underline{\mathbf Y} \in {\mathbb{R}^{{I_2} \times {I_4} \times {I_3}}}$} 
\Output{t-product $\underline{\mathbf C} = \underline{\mathbf X} * \underline{\mathbf Y}\in\mathbb{R}^{I_1\times I_4\times I_3}$}
\caption{t-product in the Fourier domain \cite{kilmer2011factorization}}\label{ALG:TSVDP}
      {
      $\widehat{\underline{\mathbf X}} = {\rm fft}\left( {\underline{\mathbf X},[],3} \right)$;\\
      $\widehat{\underline{\mathbf Y}} = {\rm fft}\left( {\underline{\mathbf Y},[],3} \right)$;\\
\For{$i=1,2,\ldots,\lceil \frac{I_3+1}{2}\rceil$}
{                        
$\widehat{\underline{\mathbf C}}\left( {:,:,i} \right) = \widehat{\underline{\mathbf X}}\left( {:,:,i} \right)\,\widehat{\underline{\mathbf Y}}\left( {:,:,i} \right)$;\\
}
\For{$i=\lceil\frac{I_3+1}{2}\rceil+1\ldots,I_3$}{
$\widehat{\underline{\mathbf C}}\left( {:,:,i} \right)={\rm conj}(\widehat{\underline{\mathbf C}}\left( {:,:,I_3-i+2} \right))$;
}
$\underline{\mathbf C} = {\rm ifft}\left( {\widehat{\underline{\mathbf C}},[],3} \right)$;   
       	} 
        }
\end{algorithm}

\begin{defn} ({Transpose})
Let $\underline{\mathbf X}\in\mathbb{R}^{I_1\times I_2\times I_3}$ be a given tensor. Then the transpose of the tensor $\underline{\mathbf X}$ is denoted by $\underline{\mathbf X}^{T}\in\mathbb{R}^{I_2\times I_1\times I_3}$, which is constructed by transposing all its frontal slices and then reversing the order of transposed frontal
slices $2$ through $I_3$.
\end{defn}

\begin{defn} ({Identity tensor})
The tensor $\underline{\mathbf I}\in\mathbb{R}^{I_1\times I_1\times I_3}$ is called identity if its first frontal slice is an identity matrix of size $I_1\times I_1$ and all other frontal slices are zero. It is easy to show $\underline{\I}*\underline{\X}={\underline \X}$ and $\underline{\X}*\underline{\I} =\underline{\X}$ for all tensors of conforming sizes.
\end{defn}
\begin{defn} ({Orthogonal tensor})
A tensor $\underline{\mathbf X}\in\mathbb{R}^{I_1\times I_1\times I_3}$ is orthogonal if ${\underline{\mathbf X}^T} * \underline{\mathbf X} = \underline{\mathbf X} * {\underline{\mathbf X}^ T} = \underline{\mathbf I}$.
\end{defn}

\begin{defn} ({f-diagonal tensor})
If all frontal slices of a tensor are diagonal then the tensor is called an f-diagonal tensor.
\end{defn}

\begin{defn}
(Random tensor) A tensor $\underline{\bf \Omega}$ is random if its first frontal slice $\underline{\bf \Omega}(:,:,1)$ has independent and identically distributed (i.i.d) elements, while the other frontal slices are zero.
\end{defn}

The \MATLAB implementation of many operations in the {t-product} format can be found in the following useful toolbox:\\ \url{https://github.com/canyilu/Tensor-tensor-product-toolbox}.

\section{Tensor SVD and Tensor QR decomposition}\label{Sec:TSVD}
Tensor SVD (t-SVD) was originally proposed in (\cite{kilmer2011factorization,kilmer2013third,braman2010third,gleich2013power}) to represent a third-order tensor as a product of three third-order tensors, where all frontal slices of the middle tensor are diagonal, (Figure \ref{Pic7}, provides a graphical illustration on the t-SVD {and} its truncated version). For a generalization of the t-SVD to higher-order {tensors,} see \cite{martin2013order}. This decomposition has found interesting applications such as completion, clustering, compression, and background initialization in video surveillance, see \cite{sobral2017matrix,jiang2017novel,jiang2019robust,he2020robust}. 

The tubal rank is defined as the number of nonzero tubes of the middle tensor. It is interesting to note that, unlike other tensor decompositions, the truncated t-SVD provides the best approximation in the least-squares sense for any unitary invariant tensor norm \cite{kilmer2011factorization}. 

\begin{figure}
\begin{center}
\includegraphics[width=9 cm,height=4.5 cm]{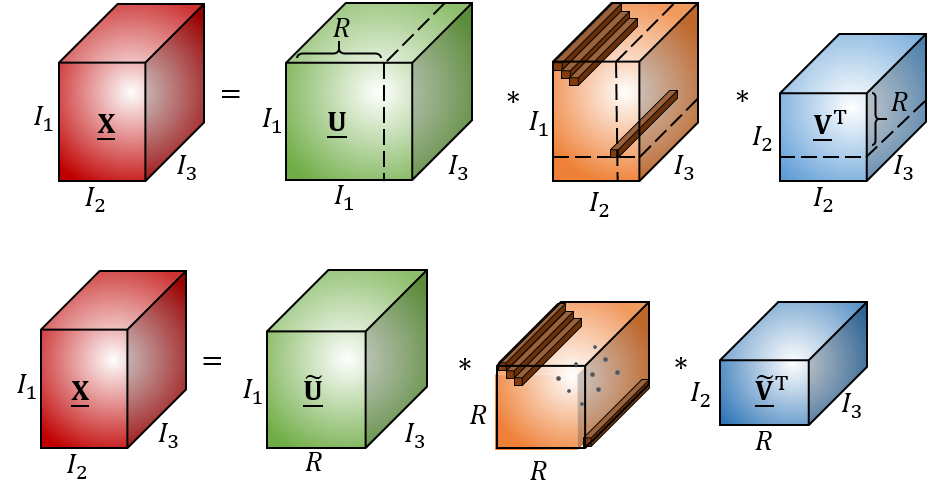}\\
\caption{\small{Illustration of ({\bf a}) Tensor SVD (t-SVD) and ({\bf b}) truncated t-SVD for a third-order tensor 
\cite{ahmadi2021cross}.}}\label{Pic7}
\end{center}
\end{figure}

Let $\underline{\bf X}\in\mathbb{R}^{I_1\times I_2\times I_3}$, then the t-SVD of the tensor $\underline{\bf X},$ admits the model $\underline{\bf X} = \underline{\bf U} * \underline{\bf S}* {\underline{\bf V}^T},$ where $\underline{\bf U}\in\mathbb{R}^{I_1\times R\times I_3}$, $\underline{\bf V}\in\mathbb{R}^{I_2\times R\times I_3}$ are orthogonal tensors and the tensor $\underline{\bf S}\in\mathbb{R}^{R\times R\times I_3}$ is f-diagonal (\cite{kilmer2011factorization,kilmer2013third}). Computation of the t-SVD is performed in the Fourier domain and this is summarized in Algorithm \ref{ALG:t-SVD}. {Algorithm \ref{ALG:t-SVD} is a faster version of the original truncated t-SVD \cite{kilmer2013third,kilmer2011factorization} and was developed in \cite{kernfeld2015tensor,lu2019tensor}.} It requires only the SVD of the $\lceil\frac{I_3+1}{2}\rceil$ first frontal slices \cite{lu2019tensor}.
The original t-SVD \cite{kilmer2011factorization,kilmer2013third} involves the SVD of all frontal slices in the Fourier domain.
Similar to the t-product, instead of the discrete Fourier transform matrices, one can define the t-SVD according to an arbitrary unitary transform matrix. It is shown in \cite{jiang2020framelet} that this can provide a t-SVD with a lower tubal rank.

\RestyleAlgo{ruled}
\LinesNumbered
\begin{algorithm}
{\smaller
\SetKwInOut{Input}{Input}
\SetKwInOut{Output}{Output}\Input{A data tensor $\underline{\mathbf X} \in {\mathbb{R}^{{I_1} \times {I_2} \times {I_3}}}$ and a tubal rank $R$;} 
\Output{${\underline{\mathbf U}_R} \in {\mathbb{R}^{{I_1} \times R \times {I_3}}},\,\,{\underline{\mathbf S}_R} \in {\mathbb{R}^{R \times R \times {I_3}}},$ ${\underline{\mathbf V}_R} \in {\mathbb{R}^{{I_2} \times R \times {I_3}}}$;}
\caption{Truncated t-SVD \cite{kilmer2011factorization,kilmer2013third}}\label{ALG:t-SVD}
      {
$\widehat{\underline{\mathbf X}}= {\rm fft}\left( {\underline{\mathbf X},[],3} \right)$;\\
\For{$i=1,2,\ldots,\lceil\frac{I_3+1}{2}\rceil$}
{                        
$[{\mathbf U},{\mathbf S},{\mathbf V}] = {\rm Truncated\operatorname{-}SVD}\left( {\widehat{\underline{\mathbf X}}(:,:,i),R} \right)$;\\
${\widehat{\underline{\mathbf U}}}\left( {:,:,i} \right) = {{\mathbf U}}$;\\
${\widehat{\underline{\mathbf S}}}\left( {:,:,i} \right) = {{\mathbf S}}$;\\ 
${\widehat{\underline{\mathbf V}}}\left( {:,:,i} \right) = {{\mathbf V}}$;\\
}
\For{$i=\lceil\frac{I_3+1}{2}\rceil+1,\ldots,I_3$}{
$\widehat{\U}(:,:,i)={\rm conj}(\widehat{\cU}(:,:,I_3-i+2))$;\\
$\widehat{\tS}(:,:,i)=\widehat{\underline{\tS}}(:,:,I_3-i+2)$;\\
$\widehat{\V}(:,:,i)={\rm conj}(\widehat{\cV}(:,:,I_3-i+2))$;}
${\underline{\mathbf U}}_R = {\rm ifft}\left( {{\widehat{\underline{\mathbf U}}},[],3} \right),\,{\underline{\mathbf S}}_R = {\rm ifft}\left( {{\widehat{\underline{\mathbf S}}},[],3} \right),\,{\underline{\mathbf V}}_R = {\rm ifft}\left( {\widehat{\underline{\mathbf V}},[],3} \right);$  
       	} }      	
\end{algorithm}

The tensor QR (t-QR) decomposition can be defined similarly based on the t-product. To be precise, for the t-QR decomposition of a tensor $\underline{\mathbf X}\in\mathbb{R}^{I_1\times I_2\times I_3},\,$ i.e., $\underline{\mathbf X} = \underline{\mathbf Q} * \underline{\mathbf R}$, we first compute the FFT of the tensor $\underline{\mathbf X}$ as 
\begin{equation}\label{fftten}
\underline{\widehat{\mathbf X}}={\rm fft}({\underline{\mathbf X}},[],3),
\end{equation}
and then the QR decompositions of all frontal slices of the tensor $\underline{\widehat{\mathbf X}}$ are computed as follows
\begin{equation}\label{LU_tubal}
\underline{\widehat{\mathbf X}}(:,:,i)=\underline{\widehat{\mathbf Q}}(:,:,i)\,\,\underline{\widehat{\mathbf R}}(:,:,i).
\end{equation}
Finally, the IFFT operator is applied to the tensors $\underline{\widehat{\mathbf Q}}$ and $\underline{\widehat{\mathbf R}}$ to compute the tensors $\underline{{\mathbf Q}}$ and $\underline{{\mathbf R}}$. A given data tensor $\underline{\bf X}$, can be orthogonalized by applying the t-QR decomposition and taking the $\underline{\bf Q}$ part. We use the notation ${\rm orth}(\underline{\bf X})$ to denote this operation. It is not difficult to see that the t-SVD and the t-QR decomposition for matrices ($I_3=1$) are reduced to the classical matrix SVD and QR decomposition. Here, ${\rm orth}({\bf X})$ gives the orthogonal part of the matrix QR decomposition. In Matlab, this is equivalent to ${\bf Q}={\bf qr}({\bf X},0)$. The tubal LU (t-LU) decomposition  \cite{zhu2022tensor} can be defined in an analogous way by replacing LU decomposition with the QR decomposition in \eqref{LU_tubal}. {We remark that the rank-revealing QR and LU decompositions and their randomized variants were extended to tensors based on the t-product in \cite{zhu2022tensor}} and can also be used in this work. 

\section{A Pass-Efficient randomized algorithm for computation of the t-SVD}\label{Sec_pro}

It is obvious that Algorithm \ref{ALG:t-SVD} is prohibitive for large-scale tensors because of the computation of multiple SVDs of some large-scale matrices. Let us first describe the idea of the random projection technique  \cite{zhang2018randomized,qi2021t} to tackle this problem. {In the first stage of the random projection method, the size of a given data tensor $\underline{\bf X}$ is reduced by multiplying the $\underline{\bf X}$ with a random tensor $\underline{\bf\Omega} \in {\mathbb{R}^{{I_2} \times {(R+P)} \times {I_3}}}$ as $\underline{\bf Y} = \underline{\bf X}*\underline{\bf \Omega}$. Then, the tensor $\underline{\bf Y}$ is orthogonalized through the t-QR decomposition of the tensor $\underline{\bf Y}$ to provide the following low tubal rank approximation:
\begin{equation}\label{ltrank}
\underline{\bf X} \cong \underline{\bf Q}*\underline{\bf B},
\end{equation}
where $\underline{\bf B} = {\underline{\bf Q}^T}*\underline{\bf X},$ and $\underline{\bf Q} \in {\mathbb{R}^{{I_1} \times R \times {I_3}}},\,\,\underline{\bf B} \in {\mathbb{R}^{R \times {I_2} \times {I_3}}}$. Note $R+P\ll{I_2}$, where $R$ is an estimation of the tubal rank and $P$ is the {\it oversampling} parameter to better capture the action of the tensor $\underline{\bf X}$ \cite{zhang2018randomized}. From the low tubal rank approximation \eqref{ltrank}, and the t-SVD of {$\underline{\B}$} as $\underline{\bf B} = \widehat{\underline{\bf U}}*\underline{\bf S}*{\underline{\bf V}^T},$ we can recover the t-SVD of $\underline{\bf X}$ as $\underline{\bf X} = \left( {\underline{\bf Q}*\widehat{\underline{\bf U}}} \right) * \underline{\bf S} * {\underline{\bf V}^T}.$ It is worth mentioning that the tensor $\B$ is smaller than the original data tensor $\X$ and requires less memory and computational resources.} This approach is efficient when the singular values of the frontal slices decay very fast; otherwise, the power iteration should be utilized. Here, the original tensor $\underline{\bf X}$ is replaced with $\underline{\bf Z}=\left(\underline{\bf X}*\underline{\bf X}^T\right)^q*\underline{\bf X}$ and the above-mentioned procedure is applied to $\underline{\bf Z}$. Due to stability issues, the tensor $\underline{\bf Z}$ should not be explicitly computed. Instead, it can be efficiently computed using the subspace iteration method \cite{halko2011finding,zhang2018randomized}.

The basic randomized algorithm for the low-rank matrix approximation is summarized in Algorithm \ref{ALg_1}. The "for" loop (Lines $3\operatorname{-}6$), is the power iteration technique and is used when the singular values of a matrix do not decay fast. It has been established that in practice, $q=1,2,3$ are sufficient to achieve good accuracy. In addition, $P$ is the oversampling parameter, which helps to better capture the range of the matrix. These strategies have been generalized to develop fast algorithms for computation of the t-SVD.
Algorithm \ref{ALgRRM}, is an extension of Algorithm \ref{ALg_1} to the case of tensors based on the t-product. For computational efficiency, one can replace the t-QR decomposition in Algorithm \ref{ALgRRM} with the t-LU decomposition \cite{zhu2022tensor,li2017algorithm,erichson2016randomized}, or alternatively use a combination of t-QR and t-LU decompositions \cite{li2017algorithm,voronin2015rsvdpack}. Similar to Algorithm \ref{ALg_1}, the power iteration procedure discussed earlier is performed in Algorithm \ref{ALgRRM}, in Lines ($3\operatorname{-}6$). For power iteration $q$, Algorithms \ref{ALg_1} and \ref{ALgRRM} need to pass the data tensor $(2q+2)$ times. Indeed, for Algorithm \ref{ALgRRM} two passes in lines $(2,\& 7)$ and $2q$ passes in lines $(3\operatorname{-}6)$ are required. This means that the number of passes over the data tensor is always an even number. To the best of our knowledge, the only paper, which proposes a single-pass randomized algorithm for computing the t-SVD is \cite{qi2021t}, which is a generalization of those proposed in \cite{battaglino2018practical} from matrices to tensors. In \cite{bjarkason2019pass}, the authors resolved the drawback of the mentioned limitation of the randomized subspace algorithms for matrices and developed randomized algorithms, which are applicable for a budget of any number of passes. This algorithm is presented in Algorithm \ref{ALg_2}. Motivated by this efficient algorithm, we extend it to the tensor case based on the t-product. 
The proposed pass-efficient algorithm can compute a low tubal rank approximation of the underlying data tensor using an arbitrary number of passes and is presented in Algorithm \ref{ALg_PASEFF}. Besides, we exploit the proposed method to develop a fast algorithm to solve the tensor completion problem as a practical application.

Now, we describe the main procedure for computing the t-SVD in a pass-efficient way. Assuming that we afford a budget of $v$ passes (the number of possible passes) for computing the t-SVD, it is clear that if $v\geq 4$ is even, then we can use the classical randomized subspace Algorithm \ref{ALgRRM} with the power iteration parameter $q=(v-2)/2$. So, how about making the amount of passes odd?

Inspired by the idea presented in \cite{bjarkason2019pass}, we suggest the following procedures for computation of the t-SVD given $v\geq 2$ passes:
\begin{itemize}
    \item (If $v$ is even) {Construct} an orthonormal tensor $\cQ$ for the range of the tensor $(\cX*\cX^T)^{(v-2)/2}*\cX*\Ome$. Then, compute the truncated t-SVD of the tensor $\cX^T*\cQ$ as
    \[
    [\cV,\cS,\widehat{\cU}]={\rm Trunacted \,\,t}\operatorname{-}{\rm SVD}(\cX^T*\cQ)
    \]
    and set $\cU=\cQ*\widehat{\cU}$.
    \\
    
    \item (If $v$ is odd) {Construct} an orthonormal tensor $\cQ$ for the range of the tensor $(\cX^T*\cX)^{(v-1)/2}*\Ome$. Then, compute the truncated t-SVD of the tensor $\cX*\cQ$ as  
    $$[\cU,\cS,\widehat{\cV}]={\rm Trunacted\,\,t}\operatorname{-}{\rm SVD}(\cX*\cQ)$$
     and set $\cV=\cQ*\widehat{\cV}$.\\
\end{itemize}

\RestyleAlgo{ruled}
\begin{algorithm}
{\smaller
\LinesNumbered
\SetKwInOut{Input}{Input}
\SetKwInOut{Output}{Output}  \Input{A data matrix ${\bf X}\in\mathbb{R}^{I_1\times I_2}$; a matrix rank $R$; Oversampling $P$ and the power iteration $q$.}  \Output{Truncated SVD: ${\bf X}\cong {\bf U}{\bf S}{\bf V}^T$}
\caption{Classical randomized subspace method for computation of the Truncated SVD \cite{voronin2015rsvdpack}}\label{ALg_1}
${\bf \Omega}={\rm randn}(I_2,P+R)$;\\
$[{\bf Q}^{(1)},\sim] = {{\rm QR}}({\bf X}{\bf \Omega})$;\\
\For{$i=1,2,\ldots,q$}{
$[{\bf Q}^{(2)},\sim] = {{\rm QR}}({\bf X}^T{\bf Q}^{(1)})$;\\
$[{\bf Q}^{(1)},\sim] = {{\rm QR}}({\bf X}{\bf Q}^{(2)})$;\\
}
$[{\bf Q}^{(2)},{\bf R}] = {{\rm QR}}({\bf X}^T{\bf Q}^{(1)})$;\\
$[\widehat{\bf V},{\bf S},\widehat{\bf U}] = {\rm Truncated }$ $\,{\rm SVD}({\bf R},R)$;\\
${\bf V}={\bf Q}^{(2)}\widehat{\bf V}$;\\
${\bf U}={\bf Q}^{(1)}\widehat{\bf U}$;
}
\end{algorithm}

\RestyleAlgo{ruled}
\begin{algorithm}
{\smaller
\LinesNumbered
\SetKwInOut{Input}{Input}
\SetKwInOut{Output}{Output} \Input{A data tensor $\underline{\bf X}\in\mathbb{R}^{I_1\times I_2\times I_3}$; a tubal rank $R$; Oversampling $P$ and the power iteration $q$.}  \Output{Truncated t-SVD: $\cX\cong \cU*\underline{\tS}*\cV^T$}
\caption{Classical randomized subspace method for computation of the Truncated t-SVD \cite{zhang2018randomized}}\label{ALgRRM}
$\Ome={\rm randn}(I_2,P+R,I_3)$;\\
$[\cQ^{(1)},\sim] = {\rm t-QR}(\cX*\Ome)$;\\
\For{$i=1,2,\ldots,q$}{
$[\cQ^{(2)},\sim] = {{\rm t-QR}}(\cX^T*\cQ^{(1)})$;\\
$[\cQ^{(1)},\sim] = {{\rm t-QR}}(\cX*\cQ^{(2)})$;\\
}
$[\cQ^{(2)},\cR] = {{\rm t-QR}}(\cX^T*\cQ^{(1)})$;\\
$[\widehat{\cV},\cS,\widehat{\cU}] = {\rm Truncated }$ ${\rm t}$-${\rm SVD}(\cR,R)$;\\
$\cV=\cQ^{(2)}*\widehat{\cV}$;\\
$\cU=\cQ^{(1)}*\widehat{\cU}$;
}
\end{algorithm} 

\RestyleAlgo{ruled}
\begin{algorithm}
{\smaller
\LinesNumbered
\SetKwInOut{Input}{Input}
\SetKwInOut{Output}{Output}  \Input{A data matrix ${\bf X}\in\mathbb{R}^{I_1\times I_2}$; a matrix rank $R$; Oversampling $P$ and the budget of number of passes $v$.} \Output{Truncated SVD: ${\bf X}\cong {\bf U}{\bf S}{\bf V}^T$}
\caption{Randomized truncated SVD with an arbitrary number of passes \cite{bjarkason2019pass}}\label{ALg_2}
${\bf Q}^{(1)}={\rm randn}(I_2,P+R)$;\\
\For{$i=1,2,\ldots,v$}{
\eIf{$i$ is odd}
{$[{\bf Q}^{(2)},{\bf R}^{(2)}] = {{\rm QR}}({\bf X}{\bf Q}^{(1)})$;}
{$[{\bf Q}^{(1)},{\bf R}^{(1)}] = {{\rm QR}}({\bf X}^T{\bf Q}^{(2)})$;}
}
\eIf{$v$ is even}
{$[\widehat{\bf V},{\bf S},\widehat{\bf U}]=$Truncated SVD$({\bf R}^{(1)},R)$;}
{$[\widehat{\bf U},{\bf S},\widehat{\bf V}]=$Truncated SVD$({\bf R}^{(2)},R)$;}
${\bf V}={\bf Q}^{(1)}\widehat{\bf V}$;\\
${\bf U}={\bf Q}^{(2)}\widehat{\bf U}$;
}
\end{algorithm} 

\RestyleAlgo{ruled}
\begin{algorithm}
{\smaller
\LinesNumbered
\SetKwInOut{Input}{Input}
\SetKwInOut{Output}{Output}\Input{A data tensor $\underline{\bf X}\in\mathbb{R}^{I_1\times I_2\times I_3}$; a tubal rank $R$; Oversampling $P$ and the budget of number of passes $v$.}  \Output{Truncated t-SVD: $\cX\cong \cU*\underline{\tS}*\cV^T$}
\caption{Proposed pass-efficient randomized truncated t-SVD with an arbitrary number of passes}\label{ALg_PASEFF}
$\cQ^{(1)}={\rm randn}(I_2,P+R,I_3)$;\\
\For{$i=1,2,\ldots,v$}{
\eIf{$i$ is odd}
{$[\cQ^{(2)},\cR^{(2)}] = {{\rm t-QR}}(\cX*\cQ^{(1)})$;}
{$[\cQ^{(1)},\cR^{(1)}] = {{\rm t-QR}}(\cX^T*\cQ^{(2)})$;}
}
\eIf{$v$ is even}
{$[\widehat{\cV},\cS,\widehat{\cU}]=$Truncate t-SVD$(\cR^{(1)},R)$;}
{$[\widehat{\cU},\cS,\widehat{\cV}]=$Truncated t-SVD$(\cR^{(2)},R)$;}
$\cV=\cQ^{(1)}*\widehat{\cV}$;\\
$\cU=\cQ^{(2)}*\widehat{\cU}$;
}
\end{algorithm} 

{Note that the proposed algorithm \ref{ALg_PASEFF} requires an estimation of the tubal rank as input but a fixed-precision algorithm is proposed in \cite{ahmadi2022efficient} that for a given approximation bound gives the tubal rank and corresponding low tubal rank approximation. The proposed randomized algorithm offers more flexibility in passing the underlying data tensor.} The next theorem gives the expected/average upper bound of the approximated tensor computed by Algorithm \ref{ALg_1} and we use it to derive similar results for Algorithm \ref{ALg_2}. 

\begin{thm}\label{THMM_1}
\cite{zhang2018randomized}
  (Average Frobenius error for Algorithm \ref{ALg_1}). Let ${\bf X}\in\mathbb{R}^{I_1\times I_2}$ and ${\bf \Omega}\in\mathbb{R}^{I_2\times (R+P)}$ be a given matrix and a 
Gaussian random matrix, respectively with the oversampling parameter $P \geq 2$. Suppose ${\bf Q}$ is obtained from Algorithm \ref{ALg_1}, then
\[
\mathbb{E}\left(\|{\bf X}-{\bf Q}{\bf Q}^T{\bf X}\|_F^2\right)\leq\left(1+\frac{R}{P-1}\tau_R^{4q}\right)\left(\sum_{j=R+1}^{\min\{I_1,I_2\}}\sigma^2_j\right),
\]
where $R$ is a matrix rank, $q$ is the power iteration, $\sigma_j$ is the $j$-th singular value of ${\bf X}$, and $\tau_R={\sigma_{R+1}}/{\sigma_R}\ll 1$ is the singular value gap.
\end{thm}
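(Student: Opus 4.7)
The plan is to follow the classical two-stage analysis introduced by Halko, Martinsson and Tropp and refined by Zhang et al.~for the version stated here: (i) establish a deterministic ``structural'' bound on $\|{\bf X}-{\bf Q}{\bf Q}^T{\bf X}\|_F^2$ in terms of the singular values of ${\bf X}$ and certain blocks of the Gaussian test matrix, then (ii) take expectation using the rotational invariance of Gaussian matrices together with the known moment bound $\mathbb{E}\|{\bf \Omega}_1^{\dagger}\|_F^2 \leq R/(P-1)$ for $P\ge 2$.

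First I would fix a thin SVD ${\bf X}={\bf U}{\bf \Sigma}{\bf V}^T$ and partition
\begin{equation*}
{\bf \Sigma}=\begin{bmatrix}{\bf \Sigma}_1 & \\ & {\bf \Sigma}_2\end{bmatrix},\qquad {\bf V}=[{\bf V}_1\;{\bf V}_2],
\end{equation*}
where ${\bf \Sigma}_1\in\mathbb{R}^{R\times R}$ collects the top $R$ singular values. I would then set ${\bf \Omega}_1={\bf V}_1^T{\bf \Omega}\in\mathbb{R}^{R\times(R+P)}$ and ${\bf \Omega}_2={\bf V}_2^T{\bf \Omega}$. By rotational invariance of the standard Gaussian distribution, ${\bf \Omega}_1,{\bf \Omega}_2$ are independent Gaussians with i.i.d.\ entries, and with overwhelming probability ${\bf \Omega}_1$ has full row rank so that ${\bf \Omega}_1^{\dagger}$ exists.

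The central (and hardest) step is the deterministic structural bound. The subspace produced by Algorithm \ref{ALg_1} is, up to orthonormalization, the range of ${\bf Y}=({\bf X}{\bf X}^T)^q{\bf X}{\bf \Omega}$, whose SVD-aligned expression is ${\bf U}{\bf \Sigma}^{2q+1}{\bf V}^T{\bf \Omega}$. I would invoke the standard projection-onto-range lemma (Halko--Martinsson--Tropp, Prop.~8.5 together with Thm.~9.2 for the power iteration case) which yields
\begin{equation*}
\|{\bf X}-{\bf Q}{\bf Q}^T{\bf X}\|_F^2 \;\le\; \|{\bf \Sigma}_2\|_F^2 + \bigl\| {\bf \Sigma}_2^{2q+1}{\bf \Omega}_2 \,({\bf \Sigma}_1^{2q+1}{\bf \Omega}_1)^{\dagger}{\bf \Sigma}_1 \bigr\|_F^2.
\end{equation*}
The key algebraic manipulation is to rewrite the second term as
\begin{equation*}
\bigl\| {\bf \Sigma}_2^{2q+1}{\bf \Omega}_2\,{\bf \Omega}_1^{\dagger}{\bf \Sigma}_1^{-2q}\bigr\|_F^2
\;\le\; \bigl(\sigma_{R+1}/\sigma_R\bigr)^{4q}\,\bigl\| {\bf \Sigma}_2{\bf \Omega}_2{\bf \Omega}_1^{\dagger}\bigr\|_F^2 = \tau_R^{4q}\,\|{\bf \Sigma}_2{\bf \Omega}_2{\bf \Omega}_1^{\dagger}\|_F^2,
\end{equation*}
which is exactly where the gap factor $\tau_R^{4q}$ appears. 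Verifying this inequality carefully (using submultiplicativity of the Frobenius norm together with the diagonal structure of ${\bf \Sigma}_1,{\bf \Sigma}_2$) is the main obstacle, because one has to track where the spectral gap enters and justify absorbing the powers of ${\bf \Sigma}_1$ and ${\bf \Sigma}_2$ into the ratio $\tau_R$.

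Finally, I would take expectations by conditioning on ${\bf \Omega}_1$ and exploiting independence of ${\bf \Omega}_2$. For any fixed deterministic matrix ${\bf M}$, a Gaussian matrix ${\bf \Omega}_2$ satisfies $\mathbb{E}\|{\bf \Sigma}_2{\bf \Omega}_2{\bf M}\|_F^2 = \|{\bf \Sigma}_2\|_F^2\,\|{\bf M}\|_F^2$, hence
\begin{equation*}
\mathbb{E}\bigl[\|{\bf \Sigma}_2{\bf \Omega}_2{\bf \Omega}_1^{\dagger}\|_F^2\bigr]
= \|{\bf \Sigma}_2\|_F^2\,\mathbb{E}\bigl[\|{\bf \Omega}_1^{\dagger}\|_F^2\bigr]
\leq \frac{R}{P-1}\,\|{\bf \Sigma}_2\|_F^2,
\end{equation*}
where the last inequality is the classical moment estimate for pseudo-inverses of Gaussian matrices valid whenever $P\ge 2$. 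Putting the pieces together and noting $\|{\bf \Sigma}_2\|_F^2=\sum_{j=R+1}^{\min\{I_1,I_2\}}\sigma_j^2$ gives exactly the claimed bound. The deterministic bound and the gap-absorbing step are the substantive parts; the expectation step is a short, standard conditioning argument.
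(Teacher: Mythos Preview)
The paper does not supply its own proof of this statement: Theorem~\ref{THMM_1} is quoted from \cite{zhang2018randomized} as a known result, and the Appendix only reproduces the analogous argument for the odd-pass variant (Theorem~\ref{thm_2} via Theorem~\ref{imtheo}). Your proposal is correct and is precisely the Halko--Martinsson--Tropp/Zhang et al.\ route that the cited reference uses and that the paper mirrors in its Appendix for the companion theorem: SVD partition, the deterministic structural bound with the matrix ${\bf F}={\bf \Sigma}_2^{2q+1}{\bf \Omega}_2{\bf \Omega}_1^{\dagger}{\bf \Sigma}_1^{-(2q+1)}$ (your $({\bf \Sigma}_1^{2q+1}{\bf \Omega}_1)^{\dagger}={\bf \Omega}_1^{\dagger}{\bf \Sigma}_1^{-(2q+1)}$ step), absorption of the spectral powers into $\tau_R^{4q}$, and the Gaussian moment estimates $\mathbb{E}\|{\bf \Sigma}_2{\bf \Omega}_2{\bf M}\|_F^2=\|{\bf \Sigma}_2\|_F^2\|{\bf M}\|_F^2$ and $\mathbb{E}\|{\bf \Omega}_1^{\dagger}\|_F^2\le R/(P-1)$.
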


It is not difficult to check that for an even number of passes, e.g., $v=2q+2$, Algorithm \ref{ALg_2} is equivalent to Algorithm \ref{ALg_1}, so Theorem \ref{THMM_1} can be used for its error analysis. However, given an odd number of passes, Algorithm 5's error analysis is presented in Theorem \ref{thm_2}. Using this, we can derive the average Frobenius norm error for Algorithm \ref{ALg_PASEFF}. Note that in \cite{bjarkason2019pass}, some average error bounds (in the spectral norm) have been established for Algorithm \ref{ALg_2}, but here we consider the Frobenius norm because it facilitates the derivation of the average error bound of the approximations achieved by the proposed Algorithm \ref{ALg_PASEFF}.

\begin{thm}\label{thm_2}
 (Average Frobenius error for Algorithm \ref{ALg_2}). Let ${\bf X}\in\mathbb{R}^{I_1\times I_2}$ and ${\bf Q}^{(1)}\in\mathbb{R}^{I_2\times (R+P)}$ be a
given matrix and a Gaussian random matrix respectively with $P \geq 2$ being the oversampling parameter. Suppose ${\bf Q}$ is obtained from Algorithm \ref{ALg_2}, with an odd number of passes $v$, then
\[
\mathbb{E}\left(\|{\bf X}-{\bf X}{\bf Q}{\bf Q}^T\|_F^2\right)\leq\left(1+\frac{R}{P-1}\tau_R^{2(2v-1)}\right)\left(\sum_{j=R+1}^{\min\{I_1,I_2\}}\sigma^2_j\right),
\]
where $R$ is the matrix rank, $\sigma_j$ is the $j$-th singular value of ${\bf X}$, and $\tau_R={\sigma_{R+1}}/{\sigma_R}\ll 1$ is the singular value gap.
\end{thm}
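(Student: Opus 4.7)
The plan is to mirror the proof strategy of Theorem \ref{THMM_1} (the matrix power-iteration analysis of \cite{zhang2018randomized} that underlies Algorithm \ref{ALg_1}), adapted to the unbalanced sketch produced by Algorithm \ref{ALg_2} when the pass budget $v$ is odd. First I would unroll the main loop. Because QR orthogonalisation preserves range, a short induction on the iteration index shows that at termination with odd $v$, the factor ${\bf Q}={\bf Q}^{(1)}$ -- the only output of a shape compatible with ${\bf X}{\bf Q}{\bf Q}^T$ -- is an orthonormal basis of $\mathrm{range}\bigl(({\bf X}^T{\bf X})^{(v-1)/2}{\bf \Omega}\bigr)$. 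Using the identity $\|{\bf X}-{\bf X}{\bf Q}{\bf Q}^T\|_F = \|({\bf I}-{\bf Q}{\bf Q}^T){\bf X}^T\|_F$, the task then reduces to analysing the error of a rank-$(R+P)$ range finder for the rows of ${\bf X}$ driven by the sketch ${\bf Y}=({\bf X}^T{\bf X})^{(v-1)/2}{\bf \Omega}$.

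Next I would move to the SVD frame. Write ${\bf X}={\bf U}_X{\bf \Sigma}_X{\bf V}_X^T$, split ${\bf \Sigma}_X=\mathrm{diag}({\bf \Sigma}_1,{\bf \Sigma}_2)$ at index $R$, and set ${\bf \Omega}_1={\bf V}_1^T{\bf \Omega}$, ${\bf \Omega}_2={\bf V}_2^T{\bf \Omega}$; by rotational invariance of the Gaussian, ${\bf \Omega}_1\in\mathbb{R}^{R\times(R+P)}$ and ${\bf \Omega}_2\in\mathbb{R}^{(I_2-R)\times(R+P)}$ are independent standard Gaussians and, for $P\geq 2$, ${\bf \Omega}_1$ has full row rank almost surely. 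In these coordinates the sketch has the block form $\bigl[{\bf \Sigma}_1^{v-1}{\bf \Omega}_1;\,{\bf \Sigma}_2^{v-1}{\bf \Omega}_2\bigr]$, and the Halko–Martinsson–Tropp deterministic projection-error lemma \cite{halko2011finding} yields
\[
\|({\bf I}-{\bf Q}{\bf Q}^T){\bf X}^T\|_F^2 \;\le\; \|{\bf \Sigma}_2\|_F^2 \;+\; \bigl\|{\bf \Sigma}_2\cdot{\bf \Sigma}_2^{v-1}{\bf \Omega}_2\,({\bf \Sigma}_1^{v-1}{\bf \Omega}_1)^{\dagger}\bigr\|_F^2,
\]
where the outer ${\bf \Sigma}_2$ factor reflects that the residual ${\bf X}^T$ carries one more multiplication by ${\bf \Sigma}_X$ than the sketch itself.

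To close, I would take expectation over ${\bf \Omega}$. Independence of ${\bf \Omega}_1$ and ${\bf \Omega}_2$ together with the standard Gaussian pseudo-inverse moment bounds (e.g.\ \cite{halko2011finding}, Propositions 10.1--10.2), which introduce the $R/(P-1)$ factor and require the assumption $P\geq 2$, integrates out the Gaussian randomness and leaves a deterministic tail inequality in the singular values. Collapsing the combined diagonal factor ${\bf \Sigma}_2\cdot {\bf \Sigma}_2^{v-1}\cdot{\bf \Sigma}_1^{-(v-1)}$ via worst-case bounds on $\sigma_j$ (for $j>R$) relative to $\sigma_R$ and $\sigma_{R+1}$, and summing over the tail, then produces the stated factor $1+\tfrac{R}{P-1}\tau_R^{2(2v-1)}$ multiplying $\sum_{j>R}\sigma_j^2$.

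The hard part will be obtaining the exponent $2(2v-1)$ cleanly. The odd-pass sketch is asymmetric -- it begins with ${\bf \Omega}$ and finishes on the same side, rather than alternating -- so the cross term contains three diagonal factors with different powers: an outer ${\bf \Sigma}_2$, a $(v-1)$-power ${\bf \Sigma}_2$ coming from the sketch, and a negative $(v-1)$-power ${\bf \Sigma}_1$ from the pseudo-inverse. Getting these to collapse to a single $\tau_R^{2(2v-1)}$ requires a careful choice, at each step, between the inequalities $\sigma_j\le\sigma_{R+1}$ and $\sigma_R^{-1}\le \tau_R\sigma_{R+1}^{-1}$, and this exponent bookkeeping is the main place where the present analysis departs from the symmetric even-pass case of Theorem \ref{THMM_1} and from the spectral-norm bounds already proved in \cite{bjarkason2019pass}.
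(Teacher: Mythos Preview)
Your proposal is correct and essentially mirrors the paper's own argument: the paper also identifies ${\bf Q}$ with an orthonormal basis for the range of a power of ${\bf X}^T{\bf X}$ applied to ${\bf \Omega}$, re-derives the HMT deterministic projection-error bound for this asymmetric sketch as a separate lemma (its Theorem~\ref{imtheo} in the Appendix), and then integrates out the Gaussian randomness via the same pseudo-inverse moment estimates (its Theorem~\ref{halko}). The exponent bookkeeping you flag as the hard part is precisely where the paper's Theorem~\ref{imtheo} does its work, namely in the bound on $\|{\bf F}{\bf \Sigma}_1\|_F$, so your identification of the crux is accurate.
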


\begin{proof}
See the Appendix.
\end{proof}

Theorem \ref{thm_3} provides the average error bound for Algorithm \ref{ALgRRM}. Similar to the matrix case, Algorithm \ref{ALg_PASEFF} is reduced to Algorithm \ref{ALgRRM} when an even number of passes is used hence Theorem \ref{thm_3} can be used for its error analysis. For the case of an odd number of passes, we present Theorem \ref{thm_4}, and its proof is quite similar to the proof of Theorem \ref{thm_3}.

\begin{thm}\label{thm_3}
\cite{zhang2018randomized}
 Given an $I_1\times I_2\times I_3$ tensor $\underline{\bf X}$ and a Gaussian tensor   $\underline{\bf \Omega}$ of size $I_2\times (R+P) \times I_3$, if $\underline{\bf Q}$ is obtained
from Algorithm \ref{ALgRRM}, then
\begin{eqnarray*}
\mathbb{E}\left(\|{\cX}-{\cQ}*{\cQ}^T*{\cX}\|_F\right)\leq \left(\frac{1}{I_3}\sum_{i=1}^{I_3}\left(1+\frac{R}{P-1}(\widetilde{\tau}_R^{(i)})^{4q}\right)\right.\\
\left.
\left(\sum_{j>R}(\widetilde{\sigma}_j^{(i)})^2\right)\right)^{1/2},
\end{eqnarray*}
where $R$ is a tubal rank, $P\geq 2$ is an oversampling parameter, $q$ is the power iteration, $\widetilde{\sigma}_j^{(i)}$
is the $i$-th component of ${\rm fft}(\underline{\bf S}(j,j,:),[ ],3)$, and the singular value gap $\widetilde{\tau}^{(i)}_R=\frac{\widetilde{\sigma}_{R+1}^{(i)}}{\widetilde{\sigma}_R^{(i)}}\ll 1$.
\end{thm}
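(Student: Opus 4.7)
The plan is to lift Theorem \ref{THMM_1} from the matrix setting to the tensor setting by working slice-by-slice in the Fourier domain, and then aggregate the resulting bounds via the Parseval-type identity \eqref{eq_fou}.

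First I would express the error tensor in the Fourier domain. Applying \eqref{eq_fou} to $\underline{\bf E}=\cX-\cQ*\cQ^T*\cX$ gives
\[
\|\cX-\cQ*\cQ^T*\cX\|_F^2 \;=\; \frac{1}{I_3}\sum_{i=1}^{I_3}\bigl\|\widehat{\underline{\bf E}}(:,:,i)\bigr\|_F^2 .
\]
Because the t-product becomes slice-wise matrix multiplication in the Fourier domain, and because the tensor transpose corresponds to the conjugate transpose of each frontal slice (possibly with an index flip that is compatible with conjugation), we have $\widehat{\underline{\bf E}}(:,:,i)=\widehat{\cX}(:,:,i)-\widehat{\cQ}(:,:,i)\widehat{\cQ}(:,:,i)^{H}\widehat{\cX}(:,:,i)$. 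Moreover, inspection of Algorithm \ref{ALgRRM} shows that when everything is transformed by \textrm{fft} along the third mode, each frontal slice $\widehat{\cQ}(:,:,i)$ is precisely the output of Algorithm \ref{ALg_1} applied to the matrix $\widehat{\cX}(:,:,i)$ with the Gaussian sketch $\widehat{\Ome}(:,:,i)$. I would devote one short paragraph to verifying this decoupling carefully.

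Next I would invoke Theorem \ref{THMM_1} on each slice. Since $\Ome$ is produced by \texttt{randn} on all entries, the slices $\widehat{\Ome}(:,:,i)$ are (complex) Gaussian matrices; hence for each $i$ the marginal bound
\[
\mathbb{E}\bigl\|\widehat{\underline{\bf E}}(:,:,i)\bigr\|_F^2 \;\le\; \Bigl(1+\tfrac{R}{P-1}(\widetilde\tau_R^{(i)})^{4q}\Bigr)\sum_{j>R}(\widetilde\sigma_j^{(i)})^2
\]
holds, where $\widetilde\sigma_j^{(i)}$ is the $j$-th singular value of $\widehat{\cX}(:,:,i)$, matching the definition in the theorem. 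Summing over $i$, dividing by $I_3$, and using linearity of expectation gives a closed-form bound on $\mathbb{E}\|\cX-\cQ*\cQ^T*\cX\|_F^2$.

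Finally I would apply Jensen's inequality in the form $\mathbb{E}\|\cdot\|_F\le\sqrt{\mathbb{E}\|\cdot\|_F^2}$ (using concavity of $\sqrt{\cdot}$) to pass from the squared Frobenius norm to the Frobenius norm itself, producing exactly the claimed estimate. The main obstacle I anticipate is the justification of the per-slice application of Theorem \ref{THMM_1}: the Gaussian slices $\widehat{\Ome}(:,:,i)$ are correlated across $i$ (they are linear combinations of the i.i.d. Gaussian slices of $\Ome$), so one cannot treat the slices as independent randomized SVDs. Fortunately the theorem only needs a \emph{marginal} Gaussianity assumption per slice, and linearity of expectation handles the summation without requiring independence; I would spell out this point explicitly to avoid any confusion.
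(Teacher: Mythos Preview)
Your proposal is correct and follows the same route the paper uses for the companion result, Theorem~\ref{thm_4} (Theorem~\ref{thm_3} itself is only cited from \cite{zhang2018randomized} and not reproved): decompose the squared Frobenius error slice-wise via \eqref{eq_fou}, apply the matrix bound of Theorem~\ref{THMM_1} to each Fourier slice, sum using linearity of expectation, and finish with $\mathbb{E}\|\cdot\|_F\le(\mathbb{E}\|\cdot\|_F^2)^{1/2}$ (what the paper calls ``Holder's identity''). Your explicit discussion of the marginal Gaussianity of the Fourier slices and the irrelevance of cross-slice dependence is a welcome clarification that the paper's own argument for Theorem~\ref{thm_4} leaves implicit.
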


\begin{thm}\label{thm_4}
 Given an $I_1\times I_2\times I_3$ tensor $\underline{\bf X}$ and an $I_2\times (R+P) \times I_3$ Gaussian tensor $\underline{\bf \Omega}$, if $\underline{\bf Q}$ is obtained
from Algorithm \ref{ALg_PASEFF}, then
\begin{eqnarray*}
\mathbb{E}\left(\|{\cX}-{\cX}*{\cQ}*{\cQ}^T\|_F\right)
\leq\hspace*{3.5cm}\\
\left(\frac{1}{I_3}\sum_{i=1}^{I_3}\left(1+\frac{R}{P-1}(\widetilde{\tau}^{(i)}_R)^{2(2v-1)}\right)
\left(\sum_{j>R}(\widetilde{\sigma}_j^{(i)})^2\right)\right)^{1/2},
\end{eqnarray*}
where $R$ is the tubal rank, $P\geq 2$ is the oversampling parameter, with an odd number of passes $v$, $\widetilde{\sigma}_j^{(i)}$
is the $i$-th component of ${\rm fft}(\underline{\bf S}(j,j,:),[ ],3)$, and the singular value gap $\widetilde{\tau}^{(i)}_R=\frac{\widetilde{\sigma}_{R+1}^{(i)}}{\widetilde{\sigma}_R^{(i)}}\ll 1$.
\end{thm}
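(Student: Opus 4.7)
My plan is to mirror the way Theorem \ref{thm_3} is extracted from Theorem \ref{THMM_1} and to carry out the analogous Fourier-domain lifting, now using the odd-passes matrix bound of Theorem \ref{thm_2} in place of the even-passes one. The key observation is that the t-product, $\mathrm{orth}(\cdot)$, and the residual $\cX-\cX*\cQ*\cQ^T$ all decouple into $I_3$ independent matrix computations once we pass to $\widehat{\cX}={\rm fft}(\cX,[\,],3)$, so the tensor statement should reduce to $I_3$ parallel invocations of Theorem \ref{thm_2} recombined by the Parseval-type identity \eqref{eq_fou}.

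First I would check that Algorithm \ref{ALg_PASEFF}, viewed in the Fourier domain, is nothing but $I_3$ copies of Algorithm \ref{ALg_2} run in parallel. Because every tensor operation in Algorithm \ref{ALg_PASEFF} is executed frontal-slice-wise on $\widehat{\cX}$, the iterates satisfy $\widehat{\cQ}^{(1)}(:,:,i)$ and $\widehat{\cQ}^{(2)}(:,:,i)$ equal, respectively, the matrix iterates of Algorithm \ref{ALg_2} applied to $\widehat{\cX}(:,:,i)$ with starting Gaussian matrix $\widehat{\Ome}(:,:,i)$. Since the FFT of an i.i.d.\ Gaussian tensor along the third mode produces Gaussian frontal slices (up to a harmless $\sqrt{I_3}$ scaling to which Algorithm \ref{ALg_2} and its error bound are invariant), Theorem \ref{thm_2} applies per slice and yields
\[
\mathbb{E}\|\widehat{\cX}(:,:,i)-\widehat{\cX}(:,:,i)\widehat{\cQ}(:,:,i)\widehat{\cQ}(:,:,i)^{*}\|_F^2 \leq \left(1+\frac{R}{P-1}(\widetilde{\tau}^{(i)}_R)^{2(2v-1)}\right)\sum_{j>R}(\widetilde{\sigma}_j^{(i)})^2.
\]

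Next I would aggregate these slice-wise estimates. Setting $\cE:=\cX-\cX*\cQ*\cQ^T$, the slice-wise structure of the t-product gives $\widehat{\cE}(:,:,i)=\widehat{\cX}(:,:,i)(\I-\widehat{\cQ}(:,:,i)\widehat{\cQ}(:,:,i)^{*})$, so by identity \eqref{eq_fou} and linearity of expectation,
\[
\mathbb{E}\|\cE\|_F^2 = \frac{1}{I_3}\sum_{i=1}^{I_3}\mathbb{E}\|\widehat{\cE}(:,:,i)\|_F^2.
\]
Substituting the per-slice bound above and then applying Jensen's inequality $\mathbb{E}\|\cE\|_F\leq\sqrt{\mathbb{E}\|\cE\|_F^2}$ yields exactly the expression stated in Theorem \ref{thm_4}.

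The main technical hurdle I foresee lies in the per-slice Gaussianity step: one has to verify that the DFT scaling is harmless (which follows from the invariance of the randomized subspace method under positive rescaling of the Gaussian starting matrix) and that the conjugate-symmetry shortcut used in the optimized t-SVD/t-QR (slices $\lceil (I_3{+}1)/2\rceil{+}1,\ldots,I_3$ are obtained by conjugation rather than by an independent computation) does not disturb the column span of the slice-wise iterates on which Theorem \ref{thm_2} is applied. Once these bookkeeping points are settled, no new probabilistic tool is needed and the statement follows as a direct slice-wise lift of the matrix-case argument, just as Theorem \ref{thm_3} follows from Theorem \ref{THMM_1}.
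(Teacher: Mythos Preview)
Your proposal is correct and follows essentially the same route as the paper: decouple the residual into frontal slices via \eqref{eq_fou}, apply Theorem~\ref{thm_2} slice by slice, and finish with $\mathbb{E}\|\cE\|_F\le(\mathbb{E}\|\cE\|_F^2)^{1/2}$ (the paper calls this ``Holder's identity'' where you invoke Jensen). Your additional remarks on the DFT scaling of the Gaussian and the conjugate-symmetry shortcut are points the paper glosses over, so in that respect your writeup is slightly more careful than the original.
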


\begin{proof}
See the Appendix.
\end{proof}

\section{An Application to tensor completion}\label{Sec:APP}
{
The problem of recovering a data tensor from only a part of its components is known as {\it tensor completion} \cite{song2019tensor}. Let $\cX\in\mathbb{R}^{I_1\times I_2\times \cdots\times I_N}$ be a given tensor with missing elements, where the indicator set $\underline{\bf\Omega},$ stores the location of known (observed) elements.
It is generally known that we may effectively recover the underlying original tensor from its incomplete version under the low-rank property assumption \cite{song2019tensor}. 
The following is a common tensor decomposition approach to solve the tensor completion problem
\cite{candes2009exact,song2019tensor}
\begin{equation}\label{MinRankCompl2}
\begin{array}{cc}
\displaystyle \min_{\underline{\bf X}} & {\|{{\bf P}_{\underline{\bf\Omega}} }({\underline{\bf X}})-{{\bf P}_{\underline{\bf\Omega}} }({\underline{\bf M}})\|^2_F},\\
\textrm{s.t.} & {\rm Rank}(\underline{\bf X})=R,\\
\end{array}
\end{equation}
where $\underline{\bf M}$ is the exact data tensor. As described in \cite{ahmadi2023fast}, using an auxiliary variable ${\underline{\bf C}}$, the optimization problem \eqref{MinRankCompl2} can be solved more conveniently by the following reformulation
\begin{equation}\label{MinRankCompl3}
\begin{array}{cc}
\displaystyle \min_{\underline{\bf X},\underline{\bf C}} & {\|{\underline{\bf X}}-{\underline{\bf C}}\|^2_F},\\
\textrm{s.t.} & {\rm Rank}(\underline{\bf X})=R,\\
& {{\bf P}_{\underline{\bf\Omega}} }({\underline{\bf C}})={{\bf P}_{\underline{\bf\Omega}} }({\underline{\bf M}})\\
\end{array}
\end{equation}
and we can alternatively solve the minimization problem \eqref{MinRankCompl2} over variables $\underline{\bf X}$ and $\underline{\bf C}$. Thus, the solution to the minimization problem \eqref{MinRankCompl2} can be approximated by the following iterative procedures
\begin{equation}\label{Step1}
\underline{\mathbf X}^{(n)}\leftarrow \mathcal{L}(\underline{\mathbf C}^{(n)}),
\end{equation}
\begin{equation}\label{Step2}
\underline{\mathbf C}^{(n+1)}\leftarrow\underline{\mathbf \Omega}\oast\underline{\mathbf M}+(\underline{\mathbf 1}-\underline{\mathbf \Omega})\oast\underline{\mathbf X}^{(n)},
\end{equation}
where $\mathcal{L}$ is an operator to compute a low-rank tensor approximation of the data tensor $\underline{\mathbf C}^{(n)}$ and $\underline{\mathbf 1}$ is a tensor whose all components are equal to one. Note that equation \eqref{Step1} solves the minimization problem \eqref{MinRankCompl3} over  
$\underline{\bf X}$ for fixed variable $\underline{\bf C}$. Also, Equation \eqref{Step2} solves the minimization problem \eqref{MinRankCompl3} over  
$\underline{\bf C}$ for fixed variable $\underline{\bf X}$. The algorithm consists of two main steps, {\it low-rank tensor approximation} \eqref{Step1} and {\it Masking computation} \eqref{Step2}. It starts from the initial incomplete data tensor $\underline{\mathbf X}^{(0)}$ with the corresponding observation index set $\underline{\mathbf \Omega}$ and  sequentially improves the approximate solution till some stopping criterion is satisfied or the maximum number of iterations is reached. We do not need to compute the term $\underline{\mathbf \Omega}\oast\underline{\mathbf M}$ at each iteration because it is just the initial data tensor $\underline{\mathbf X}^{(0)}$. The filtering/smoothing procedure is a known technique in signal processing community to improve the image quality. Indeed, in the above procedure, we exploit this idea and smooths out the tensor $\underline{\bf C}^{(n+1)}$ before applying the low tensor rank approximation operator $\mathcal{L}$ to get better results. The first step is computationally expensive steps especially when a large number of iterations is required for convergence or the data tensor is quite large. Here, we use our randomized pass-efficient Algorithm \ref{ALg_PASEFF} instead of the deterministic algorithms. The experimental results show that this algorithm provides promising results with lower computational cost.}

\section{Simulations}\label{Sec:Sim}
In this section, we test the proposed randomized algorithm on synthetic and real-world datasets. All numerical simulations were performed on a laptop computer with 2.60 GHz Intel(R) Core(TM) i7-5600U processor and 8GB memory. The Peak Signal-to-Noise Ratio (PSNR) and relative error have been utilized to evaluate the performance of the proposed algorithm. The PSNR of two images $\underline{\bf X}$ and $\underline{\bf Y}$ is defined as
\[
{\rm PSNR}=10\log 10\left({\frac{{\|\cX\|}_{\infty}}{\|\cX-\cY\|_F}}\right)\quad {\rm dB}.
\]
The relative error is also defined as 
\[
e(\widetilde{\cX})=\frac{\|\cX-\widetilde{\cX}\|_F}{\|\cX\|_F},
\]
where $\X$ is the original tensor and $\widetilde{\X}$ is the approximated tensor. {We compare the proposed algorithm with the baseline methods: Truncated t-SVD (Algorithm \ref{ALG:t-SVD}) and randomized t-SVD (Algorithm \ref{ALgRRM}).}
\begin{exa}\label{Ex_1}
In this experiment, we generate a tensor $\cX\in\mathbb{R}^{500\times 500\times 500}$ with exact tubal rank $15$. We set the oversampling parameter $P=5$ and apply Algorithm \ref{ALg_PASEFF} with the tubal rank $R=10$ with different numbers of passes over the data tensor $\cX$. In Figure \ref{Pic_Exam_1} (right), we report the relative error versus the number of passes. The results show that with $v=2,3$ passes, we can achieve quite good results and for a larger number of passes, the relative error is smoothly decreased, while the computational complexity is also higher. In Figure \ref{Pic_Exam_1} (left), we report the running time of Algorithm \ref{ALg_PASEFF} for different numbers of passes.  Please note that the benefit of Algorithm \ref{ALg_PASEFF} compared with Algorithm \ref{ALgRRM} is that it does not necessarily need to pass the original data tensor $\cX$ four times, and with only two passes, we can achieve reasonable results. This is one benefit of the proposed method over method \ref{ALgRRM}, which lacks this flexibility in terms of the number of passes. In Table \ref{Table1}, we compare the outcomes obtained by the proposed method with those obtained by the truncated t-SVD (Algorithm \ref{ALG:t-SVD}) and the randomized t-SVD (Algorithm \ref{ALgRRM}). The outcome clearly demonstrates that we can get about the same accuracy for the aforementioned data tensor in much less time. It is intriguing that the suggested approach only requires two runs, but Algorithm \ref{ALgRRM} requires at least four passes (for power iteration q = 1). This means that the proposed algorithm even requires less running time than the randomized Algorithm \ref{ALgRRM}. 
\begin{table}
\begin{center}
\caption{Comparing the running time and relative errors achieved by the proposed algorithm, Algorithm \ref{ALG:t-SVD} and Algorithm \ref{ALgRRM} for Example \ref{Ex_1}. The results are for the tubal rank $R=10$.}\label{Table1}
\vspace{0.2cm}
{\smaller
\begin{tabular}{||c| c | c ||} 
 \hline
Algorithms  & Running Time (Seconds) & Relative error \\
 \hline\hline
 Truncated t-SVD \cite{kilmer2011factorization,kilmer2013third} &  25.52 &  {\bf 3.4e-15}  \\ 
  Randomized t-SVD \cite{zhang2018randomized} &  17.65 &  5.2e-15  \\ 
 Proposed algorithm  & {\bf 8.23}  &  7.1e-15  \\
 \hline
\end{tabular}
}
\end{center}
\end{table}

The output of Algorithm \ref{ALg_PASEFF} for two and three runs was insignificant since the artificial data tensor utilized in this case was noiseless. Examples \ref{salman} and \ref{salman_2} show that the outcomes of two and three passes are important for the real-world datasets (images and videos). 

To further examine the proposed approach, the results for different tubal ranks are also reported in Figure \ref{ex1_dr}. We see that the proposed algorithm is still efficient and robust for other tubal ranks. 

{Additionally, we examined the robustness of the proposed algorithm for situations that the true rank is not selected. To this end, we first considered a larger tubal rank $R=15$, where the algorithm provided an approximation with a relative error of $2.1231e-15$ while for a smaller tubal rank $R=5$, the algorithm gave an approximation with a relative error of $0.5619$ which is close to the best approximation computed by the t-SVD. To solve the problem of selecting a tubal rank lower than the true tubal rank, one can gradually increase the tubal rank until a satisfying approximation is achieved. Indeed, we combined this technique with our proposed algorithm and by gradually increasing the tubal rank from $R=5$ to $R=10$, the approximation with a relative error of $5.4582e-15$ was achieved. As the proposed algorithm is very fast especially for relatively small tubal ranks, it is also applicable for the exact tubal rank estimation task. These simulations convinced us that it can be efficiently used in different applications.}

\end{exa}

\begin{figure*}
\begin{center}
\includegraphics[width=0.5\columnwidth]{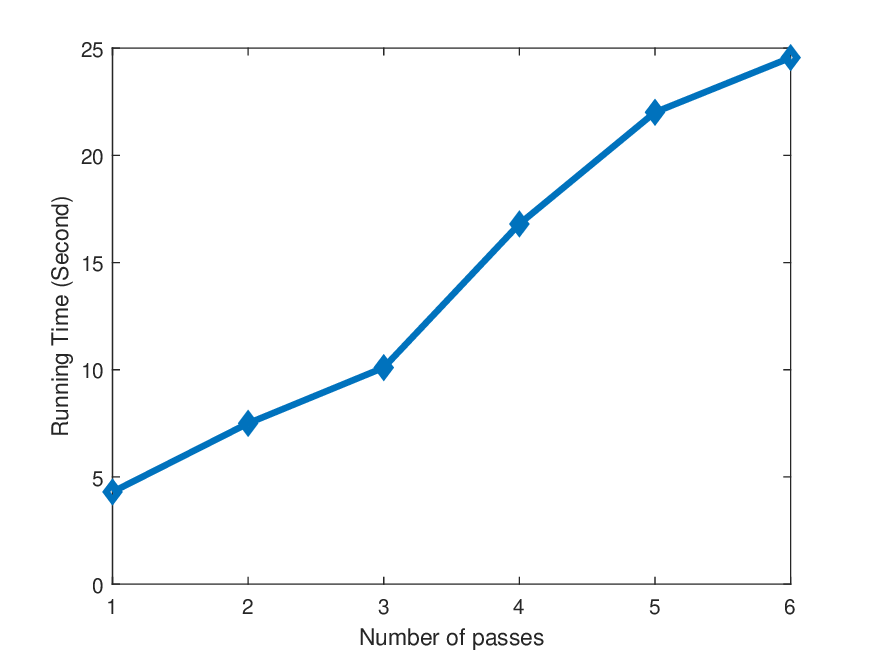}\includegraphics[width=0.5\columnwidth]{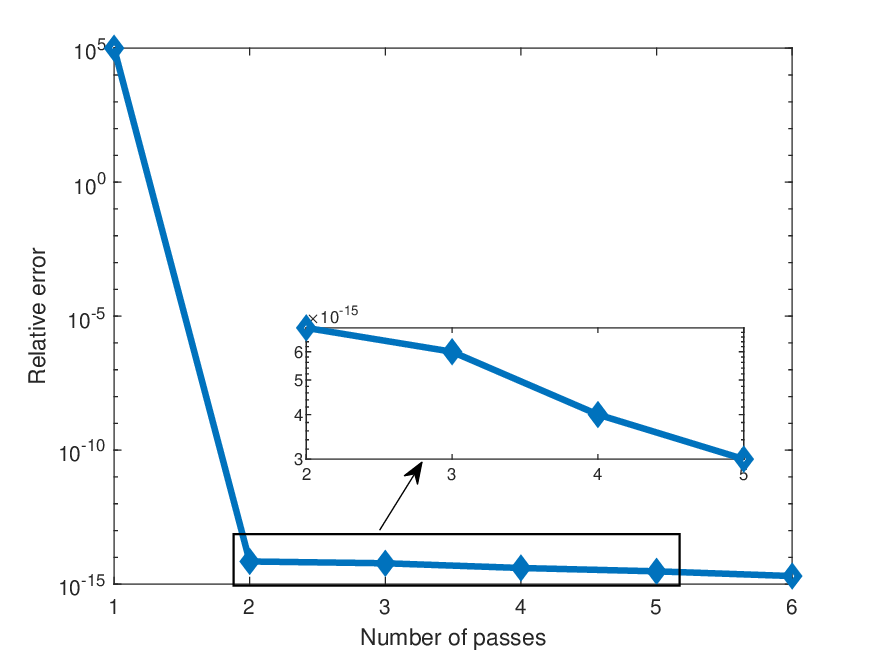}\\
\caption{\small{Running time and relative error of the approximations achieved by the proposed algorithm for a synthetic data tensor of size $500\times 500\times 500$ and the tubal rank $R=15$ using different numbers of passes for Example \ref{Ex_1}.}}\label{Pic_Exam_1}
\end{center}
\end{figure*}

\begin{figure*}
\begin{center}
\includegraphics[width=0.53\columnwidth]{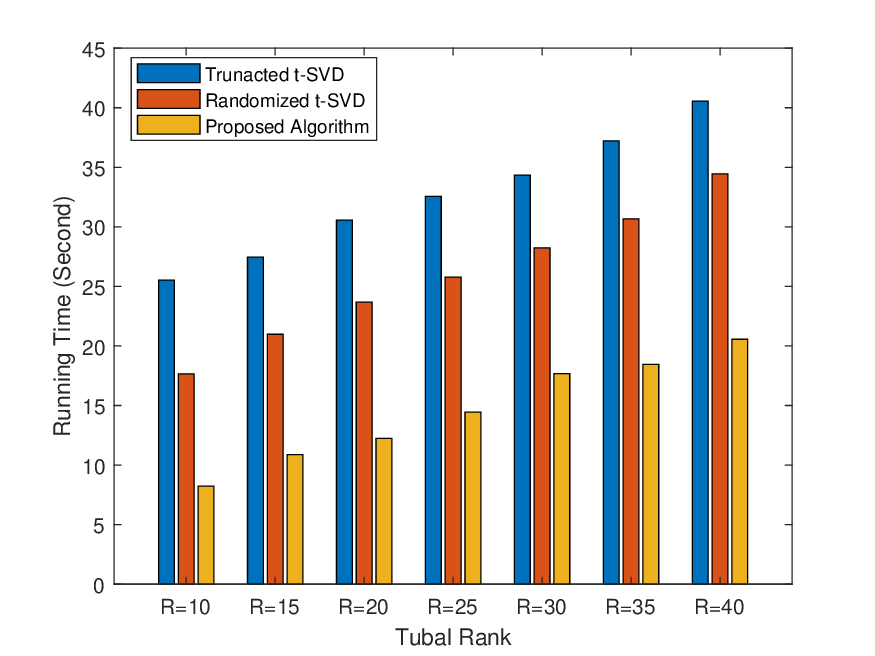}\includegraphics[width=0.53\columnwidth]{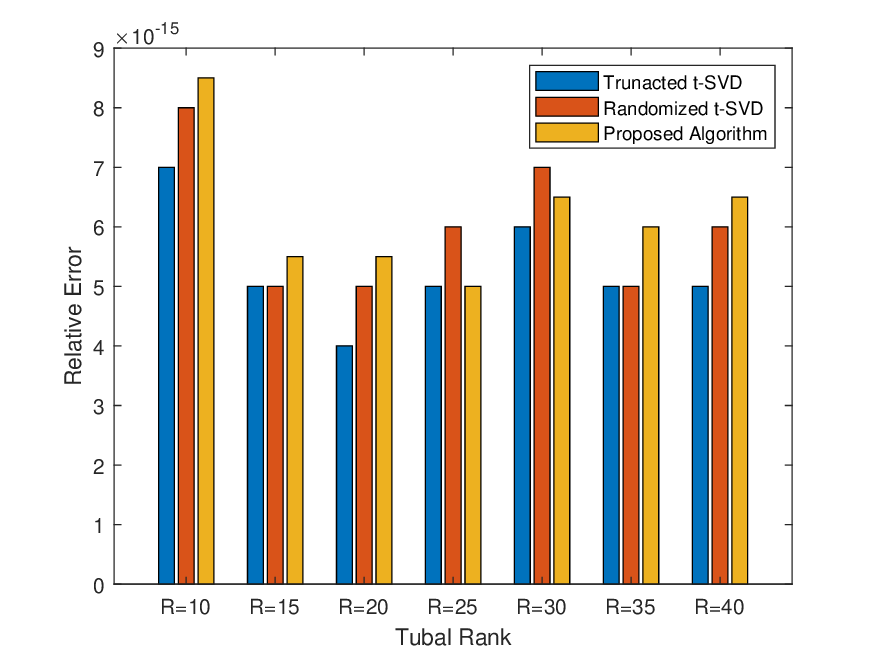}\\
\caption{\small{Running time and relative error of the approximations achieved by the truncated t-SVD, the randomized t-SVD and the proposed algorithm for a synthetic data tensor of size $500\times 500\times 500$ for different tubal ranks for Example \ref{Ex_1}.}}\label{ex1_dr}
\end{center}
\end{figure*}

\begin{exa}\label{salman}
In this example, we apply the proposed algorithm to compress color images. To this end, we consider the ``Kodim03'' and ``Kodim23'' color images included in the Kodak dataset \footnote{\url{http://www.cs.albany.edu/~xypan/research/snr/Kodak.html}}. We set the oversampling parameter $P=6$ and apply Algorithm \ref{ALg_PASEFF} to the mentioned images with the tubal rank $R=40$ for different numbers of passes. The reconstructed images and corresponding results including relative error, PSNR, and running time achieved by the proposed algorithm are reported in Figure \ref{Pic_Exam_2_1}. As can be seen, the higher the number of passes, the better performance of the images and the higher the computational cost. Moreover, we compare the running time and the PSNR achieved by the proposed algorithm with the Truncated t-SVD (Algorithm \ref{ALG:t-SVD}) and the randomized t-SVD (Algorithm \ref{ALgRRM}) and they are shown in Figure \ref{Table_Salman}. These results indicate that the proposed algorithm can provide approximately the same reconstruction (PSNR) as the baseline methods but with less execution time. 
\begin{table}
\begin{center}
\caption{Comparing the running time and the PSNR achieved by the proposed algorithm, Algorithm \ref{ALG:t-SVD} and Algorithm \ref{ALgRRM} for Example \ref{salman}. The results are for the tubal rank $R=40$.}\label{Table_Salman}
\vspace{0.2cm}
{\smaller
\begin{tabular}{||c| c | c ||} 
 \multicolumn{3}{c}{Kodim23 Image}\\
 \hline
Algorithms  & Running Time (Seconds) & PSNR \\
 \hline\hline
 Truncated t-SVD \cite{kilmer2011factorization,kilmer2013third} & 0.45  &  {\bf 27.87}  \\ 
  Randomized t-SVD \cite{zhang2018randomized} & 0.23  & 27.51   \\ 
 Proposed algorithm  & {\bf 0.18}  &   27.38 \\
 \hline
 \multicolumn{3}{c}{Kodim03 Image}\\\hline
 Algorithms  & Running Time (Seconds) & PSNR \\
 \hline\hline
 Truncated t-SVD \cite{kilmer2011factorization,kilmer2013third} &  0.23 &  {\bf 27.67}  \\ 
  Randomized t-SVD \cite{zhang2018randomized} &  0.19 &  27.39  \\ 
 Proposed algorithm  & {\bf 0.10}  &  27.23  \\
 \hline
\end{tabular}
}
\end{center}
\end{table}

\begin{figure*}
\begin{center}
\includegraphics[width=.9\columnwidth]{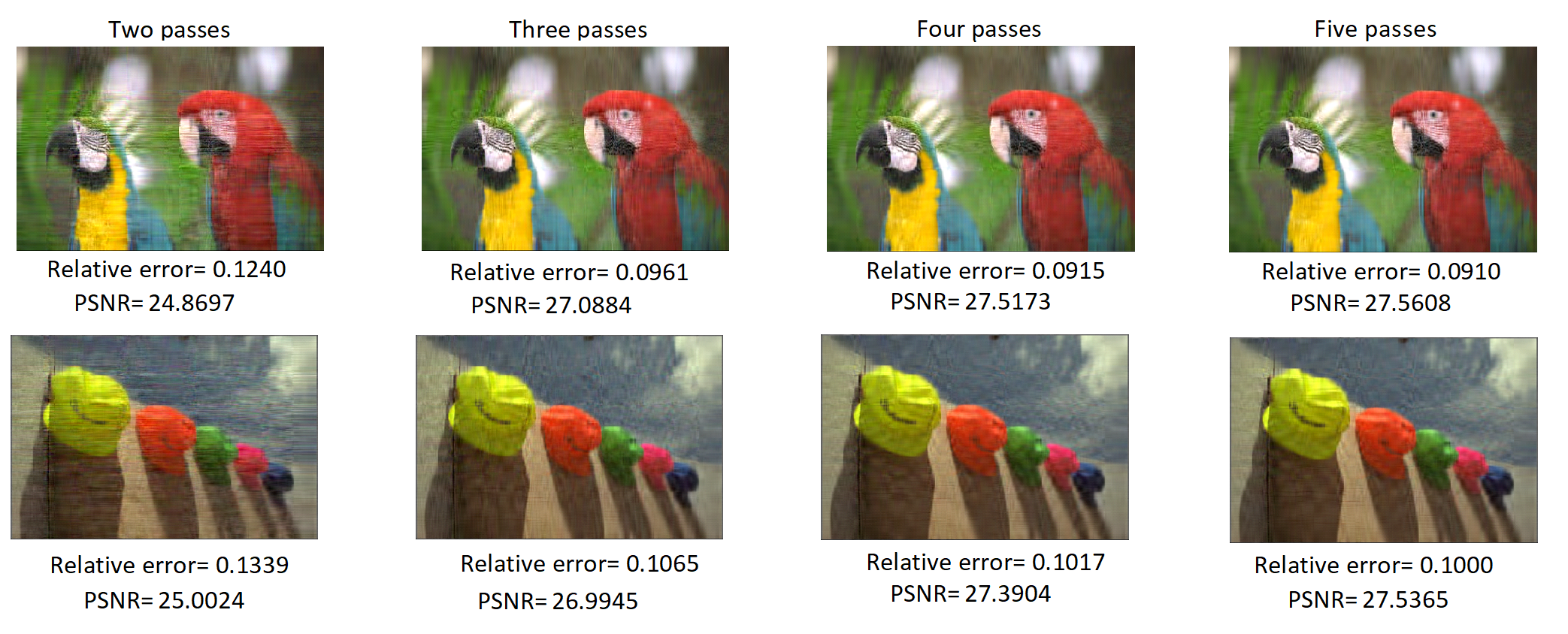}\\
\includegraphics[width=.5\columnwidth]{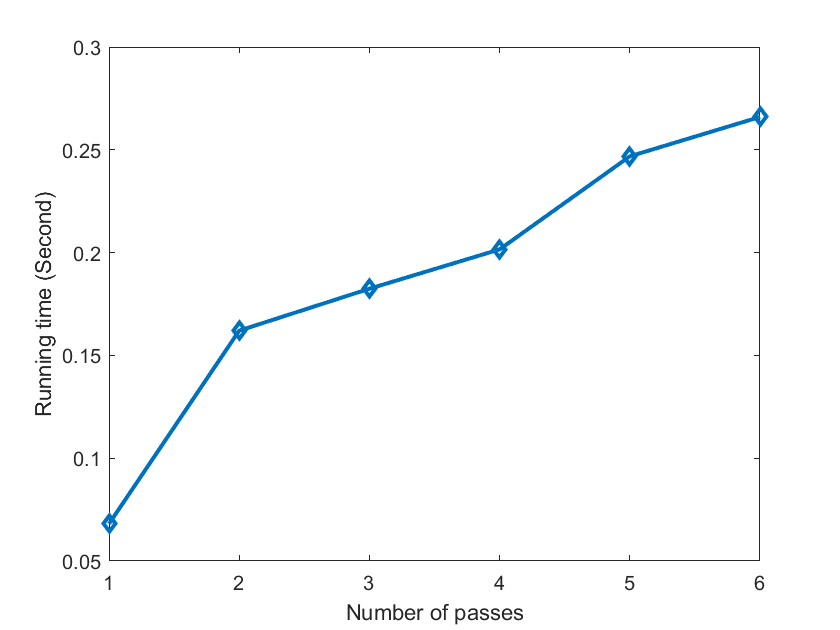}\includegraphics[width=.5\columnwidth]{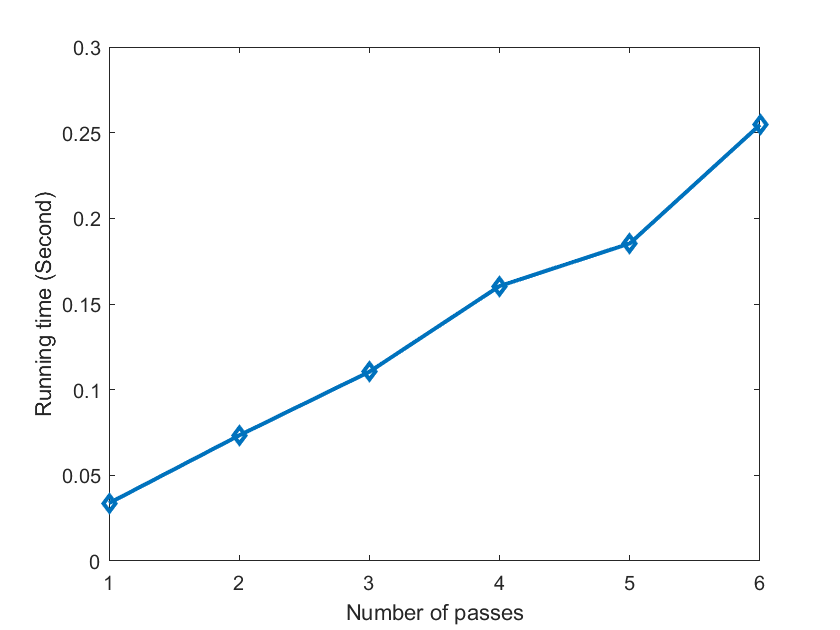}
\caption{\small{(Upper) The reconstruction of the ``Kodim23'' and the ``Kodim03'' images using the proposed algorithm for the tubal rank $R=20$ and different numbers of passes\,\,(Bottom) The running time of the proposed algorithm for computation of the truncated t-SVD of the ``Kodim23'' image (left) and the ``Kodim03'' image (right) with the tubal rank $R=40$ and using different numbers of passes for Example \ref{salman}. .}}\label{Pic_Exam_2_1}
\end{center}
\end{figure*}

\begin{figure*}
\begin{center}
\includegraphics[width=0.53\columnwidth]{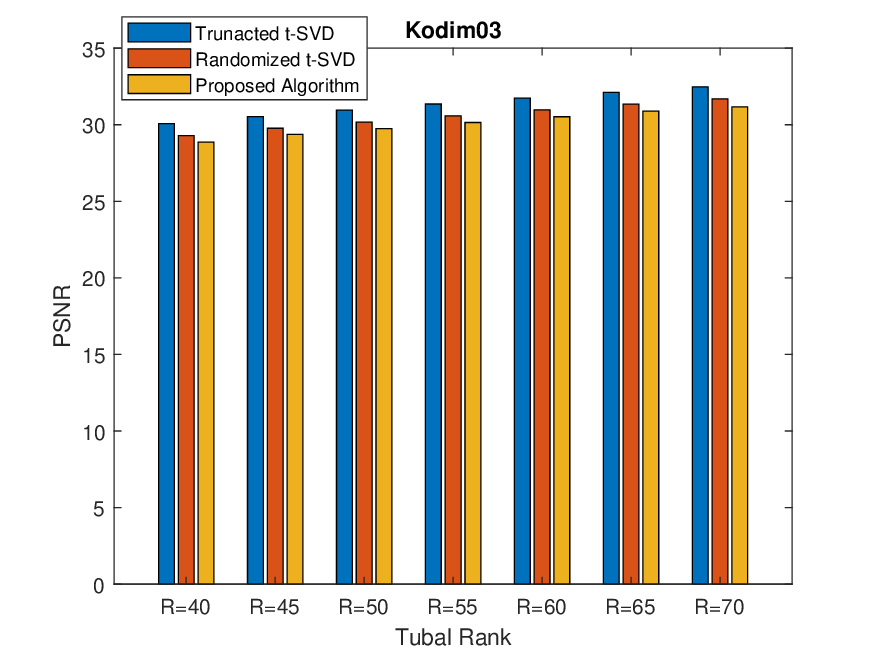}\includegraphics[width=0.53\columnwidth]{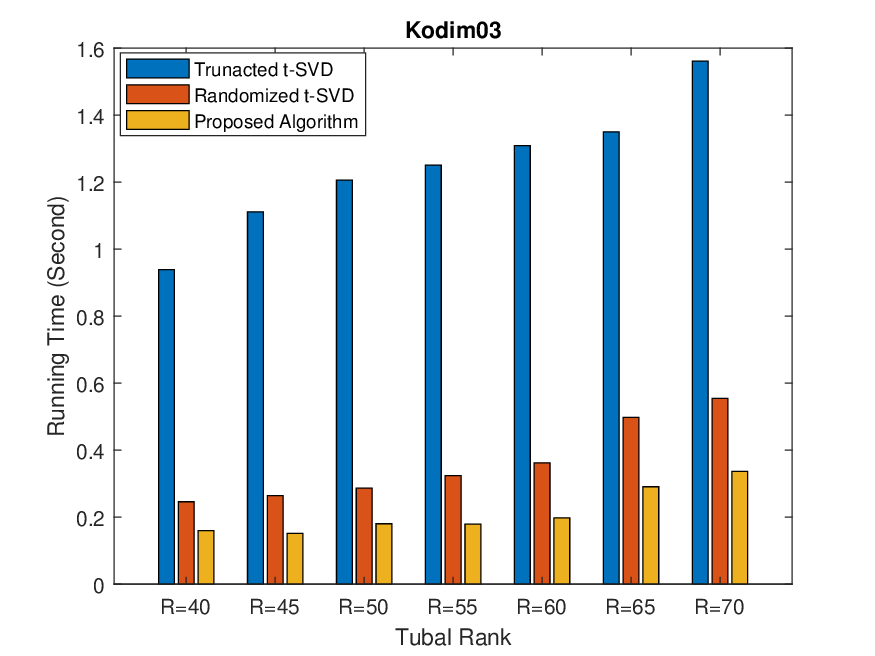}\\
\includegraphics[width=0.53\columnwidth]{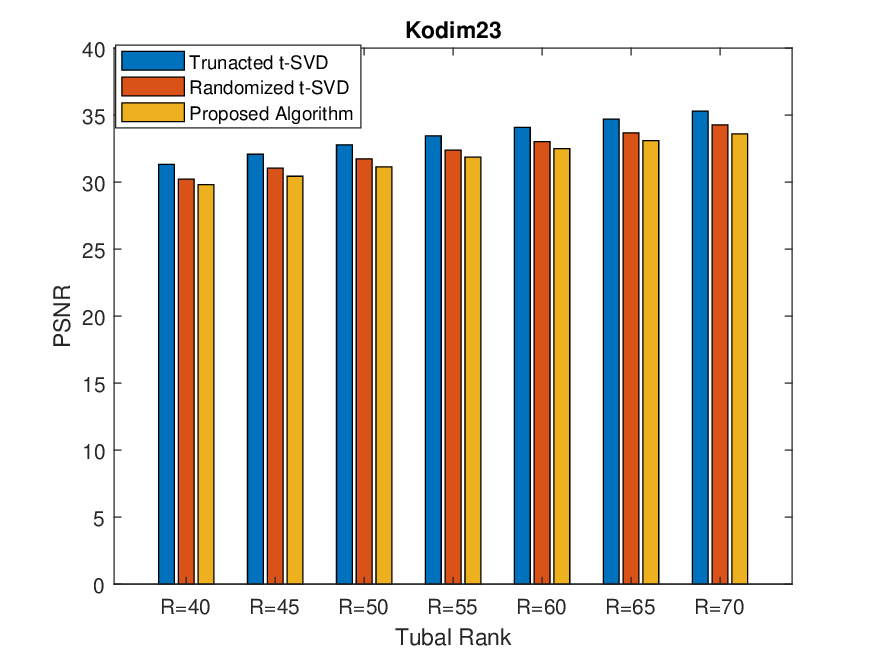}\includegraphics[width=0.53\columnwidth]{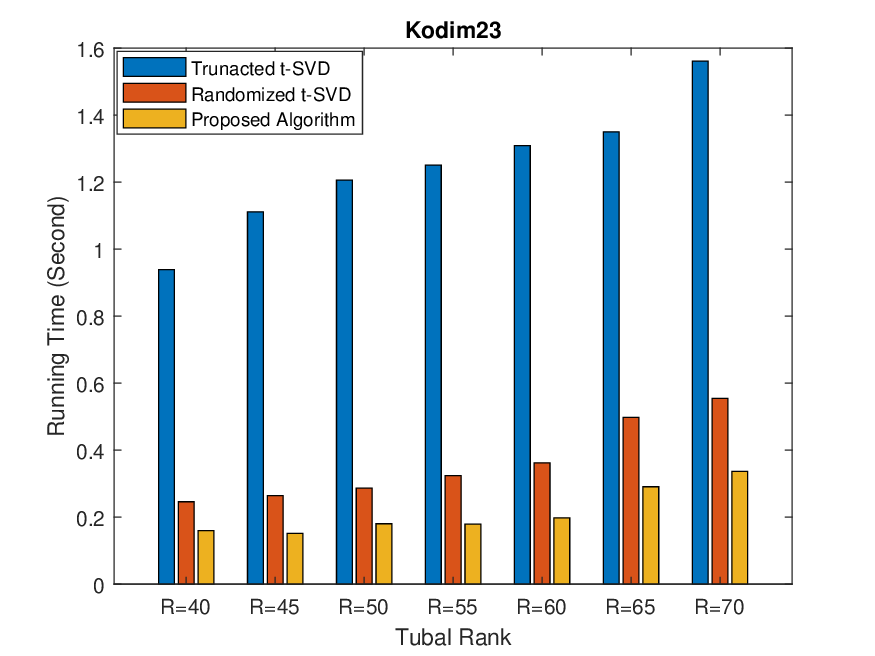}\\
\caption{\small{The running time and PSNRs of the reconstructed images achieved by the truncated t-SVD, the randomized t-SVD and the  proposed algorithm using different tubal ranks for Example \ref{salman}.}}\label{ex2_drm}
\end{center}
\end{figure*}

\end{exa}

\begin{exa}\label{salman_2}
In this experiment, we examine Algorithm \ref{ALg_PASEFF} for compressing video datasets. We have used the ``Foreman'' and ``News'' videos from \cite{WinNT} in this test. Both videos are third-order tensors of size $144\times 176\times 300$. We set the oversampling parameter $P=5$ and apply the proposed algorithm for computing t-SVD with the tubal rank $R=20$. For this tubal rank, we achieve the compression ratio $3.7271$. The PSNR of some random samples of the frames for the mentioned two videos and different numbers of passes are reported in Figure \ref{Video_recon_1}. Besides, the PSNR of all frames of both videos are shown in Figure \ref{Video} (upper). The corresponding running times can been seen in Figure \ref{Video} (bottom). Here again, the same results as the previous two simulations are achieved and using more passes over the video dataset, we achieve better results with a higher computational cost. A comparison between the mean of the PSNR of all reconstructed frames by the proposed algorithm and the truncated t-SVD (Algorithm \ref{ALG:t-SVD}) and the randomized t-SVD (Algorithm \ref{ALgRRM}) are made in Table \ref{Table_Salman_2}. The running time and PSNRs of the reconstructed images obtained by the proposed algorithm and the baselines for different tubal ranks are also compared in \ref{ex2_drm}. Here again, we see that the proposed algorithm provides almost the same reconstruction as the baseline but with less running time.
\end{exa}

\begin{figure*}
\begin{center}
\includegraphics[width=.9\columnwidth]{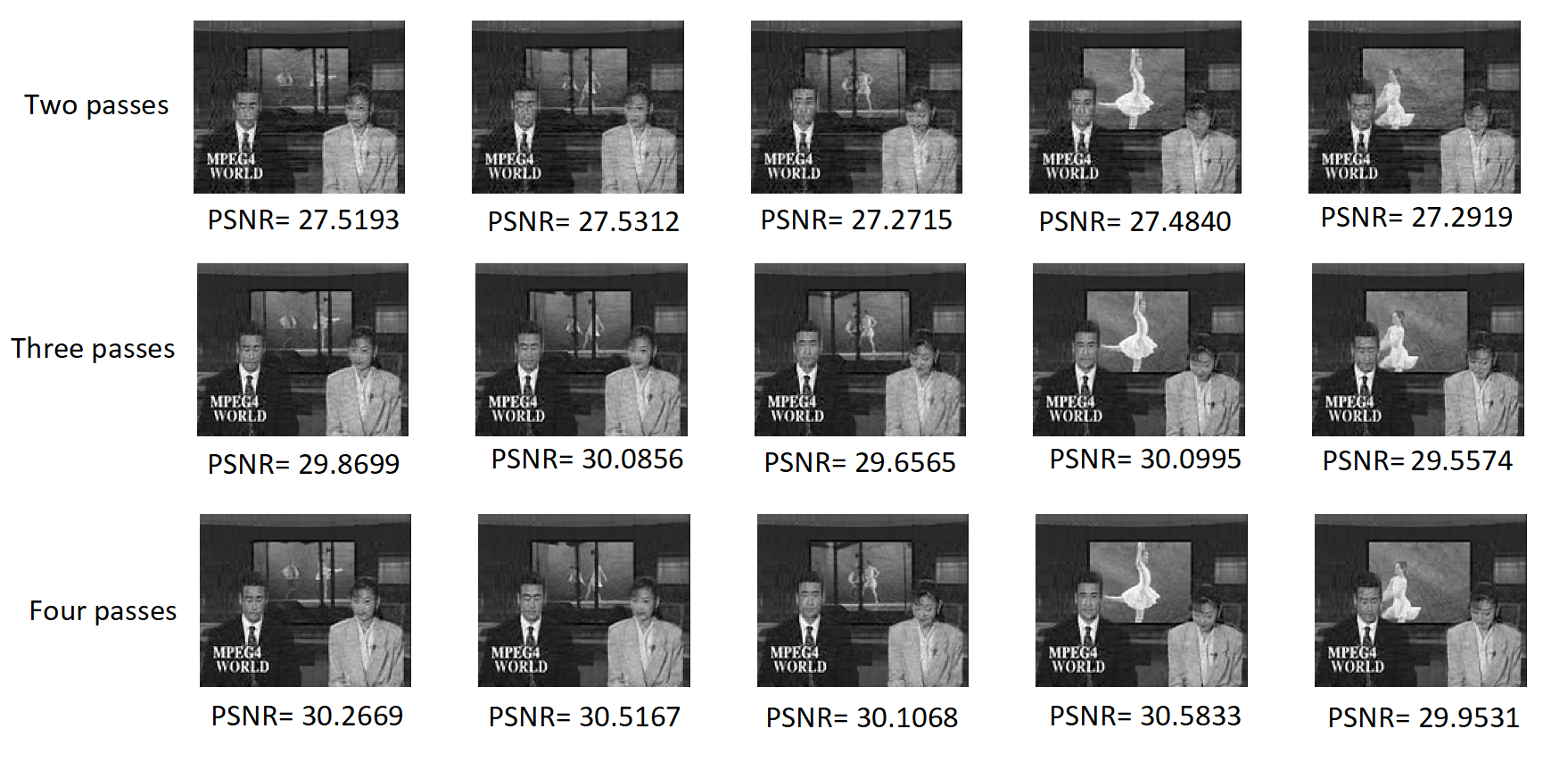}\\\includegraphics[width=.9\columnwidth]{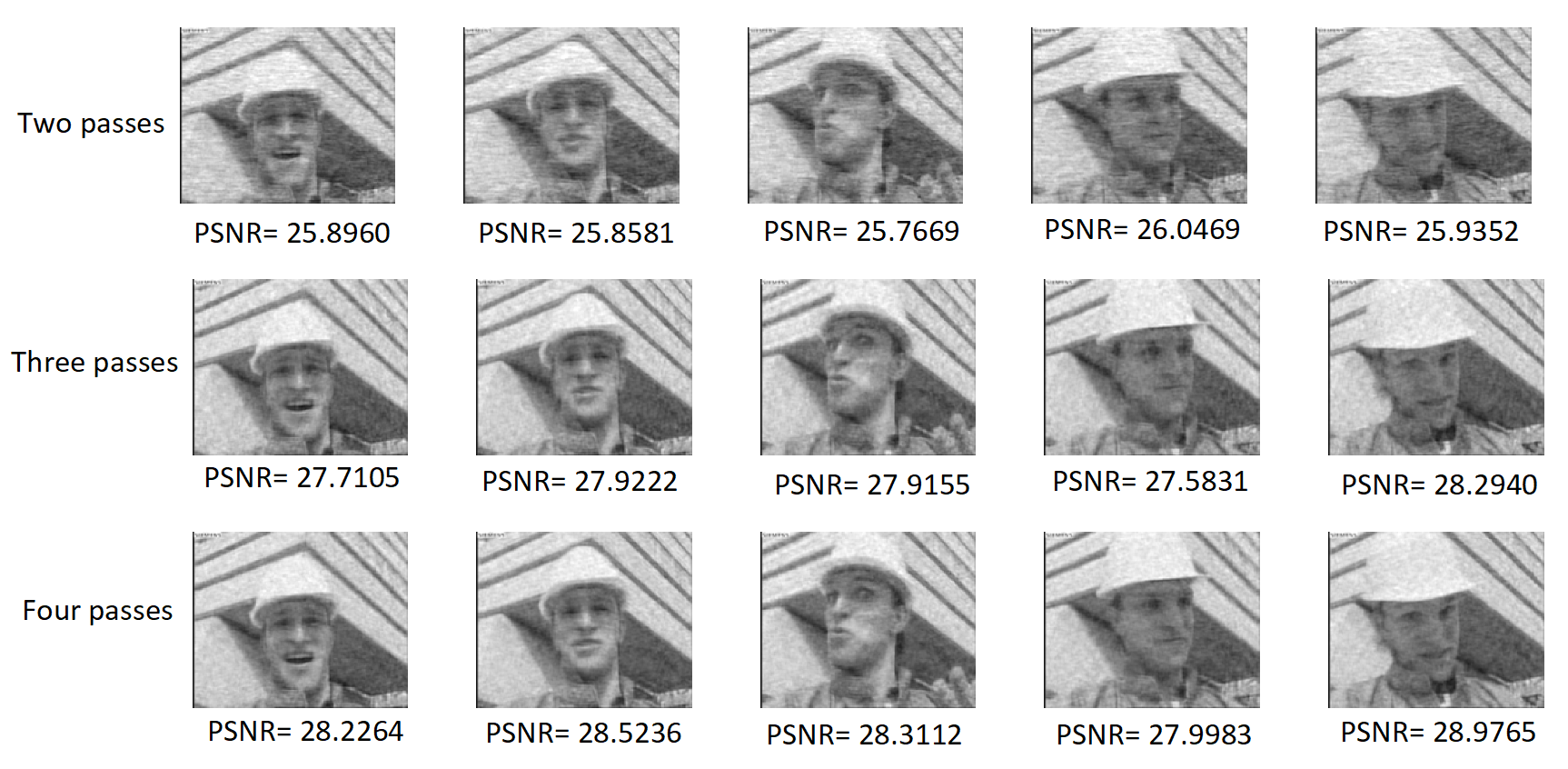}
\caption{\small{Reconstruction of some random frames for the ``News'' and the ``Foreman'' videos using the proposed algorithm. The tubal rank $R=20$ and different numbers of passes for Example \ref{salman_2}.}}\label{Video_recon_1}
\end{center}
\end{figure*}

\begin{table}
\begin{center}
\caption{Comparing the running time and the mean of the PSNR of all frames achieved by the proposed algorithm, Algorithm \ref{ALG:t-SVD} and Algorithm \ref{ALgRRM} for Example \ref{salman_2}. The results are for the tubal rank $R=25$. }\label{Table_Salman_2}
\vspace{0.2cm}
 {\smaller
\begin{tabular}{||c| c | c ||} 
 \multicolumn{3}{c}{News dataset}\\
 \hline
Algorithms  & Running Time (Seconds) & PSNR \\
 \hline\hline
 Truncated t-SVD \cite{kilmer2011factorization,kilmer2013third} & 2.64  &  {\bf 30.45}  \\ 
  Randomized t-SVD \cite{zhang2018randomized} & 2.02  & 30.10   \\ 
 Proposed algorithm  & {\bf 1.10}  &   29.65 \\
 \hline
 \multicolumn{3}{c}{Foreman dataset}\\
 \hline
 Algorithms  & Running Time (Seconds) & PSNR \\ \hline\hline
 Truncated t-SVD \cite{kilmer2011factorization,kilmer2013third} &  2.67 &  {\bf 28.25}  \\ 
  Randomized t-SVD \cite{zhang2018randomized} &  2.13 &  28.31  \\ 
 Proposed algorithm  & {\bf 1.05}  &  28.09  \\
 \hline
\end{tabular}
}
\end{center}
\end{table}

\begin{figure*}
\begin{center}
\includegraphics[width=.5\columnwidth]{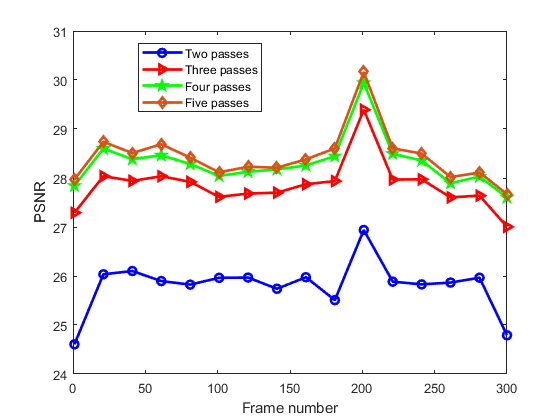}\includegraphics[width=.5\columnwidth]{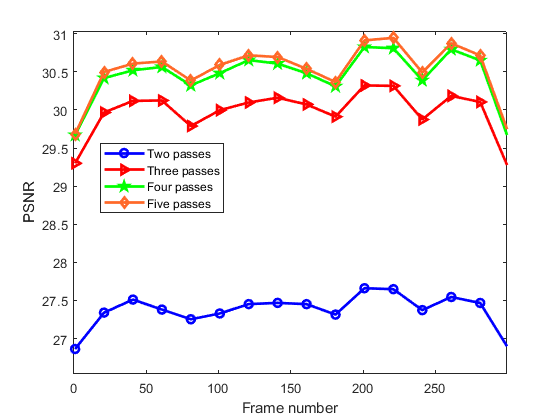}\\
\includegraphics[width=.5\columnwidth]{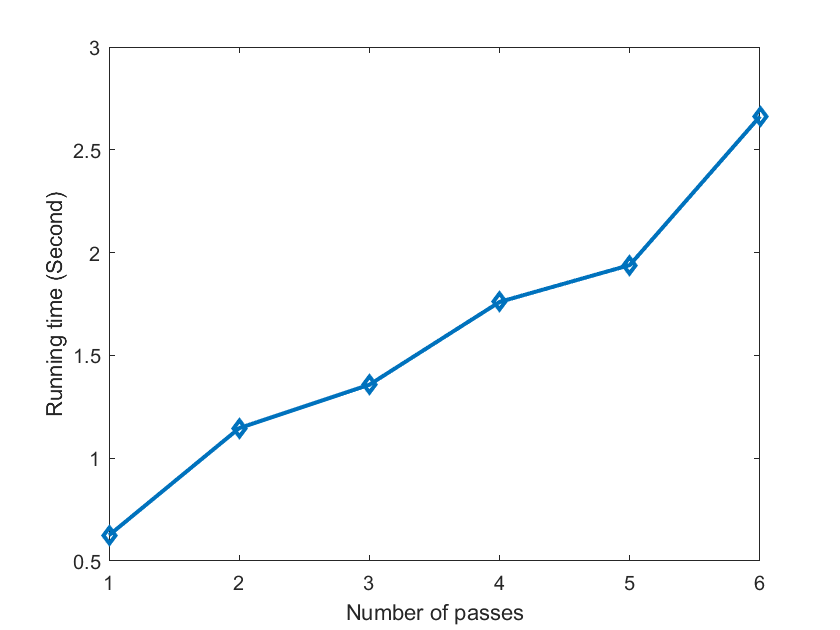}\includegraphics[width=.5\columnwidth]{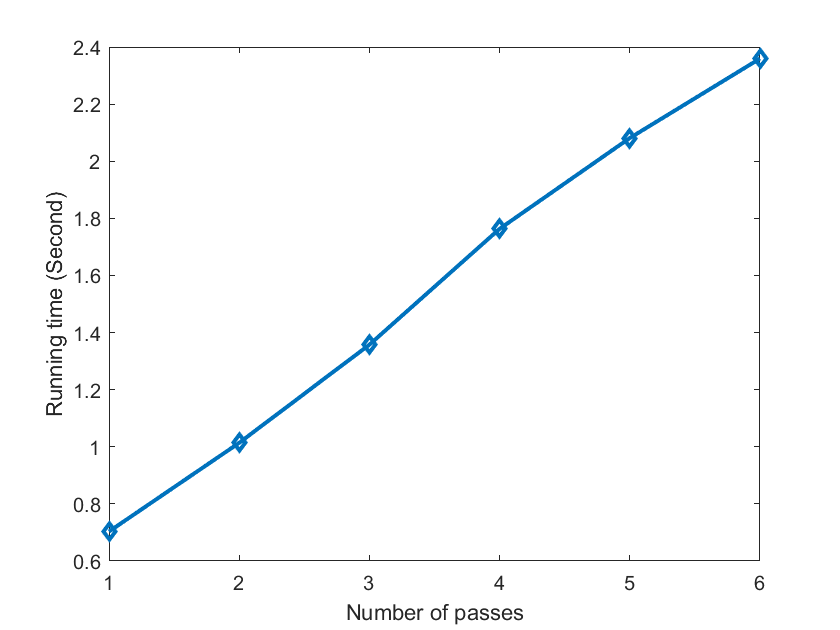}
\caption{Example \ref{salman_2}. \small{(Upper) The results of the proposed algorithm for video compression using different numbers of passes and the tubal rank $R=20$ (the left figures are for the ``Foreman'' video and the right is for the ``News'' video). (Bottom). The running time of the proposed algorithm for different numbers of passes and the tubal rank $R=20$. The left figure is for the ``Foreman'' video and the right figure is for the ``News'' video}.}\label{Video}
\end{center}
\end{figure*}

\begin{figure*}
\begin{center}
\includegraphics[width=0.53\columnwidth]{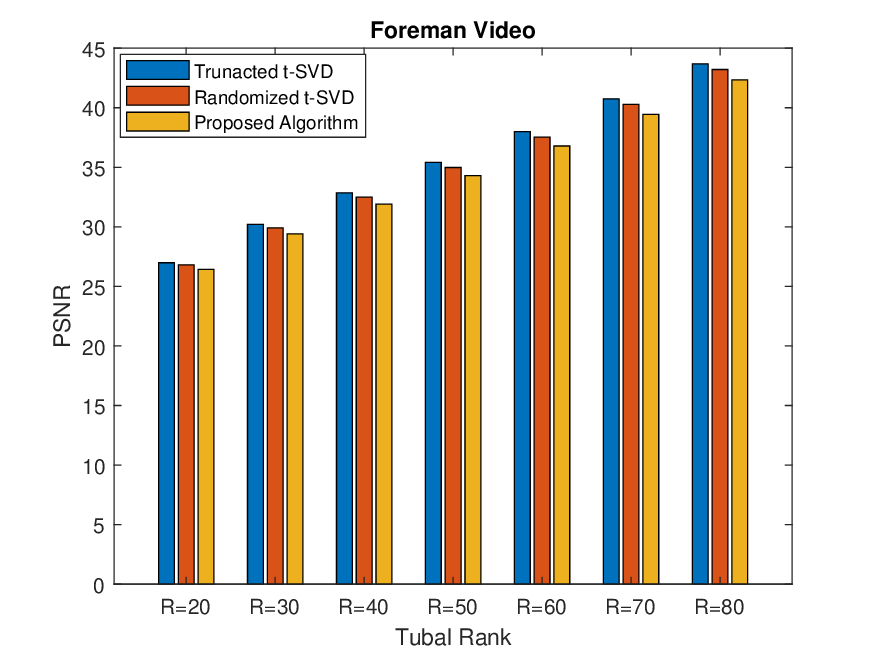}\includegraphics[width=0.53\columnwidth]{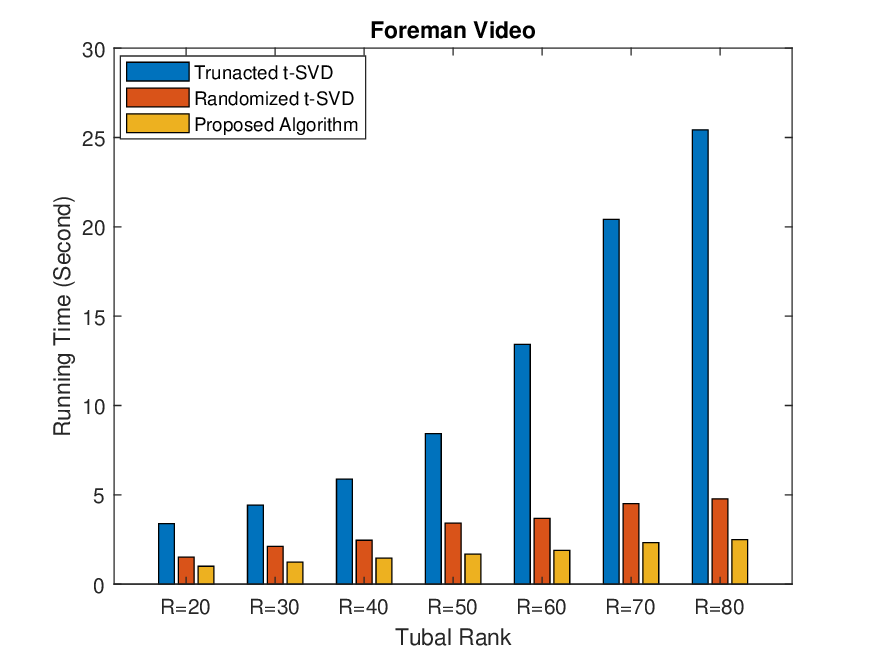}\\
\includegraphics[width=0.53\columnwidth]{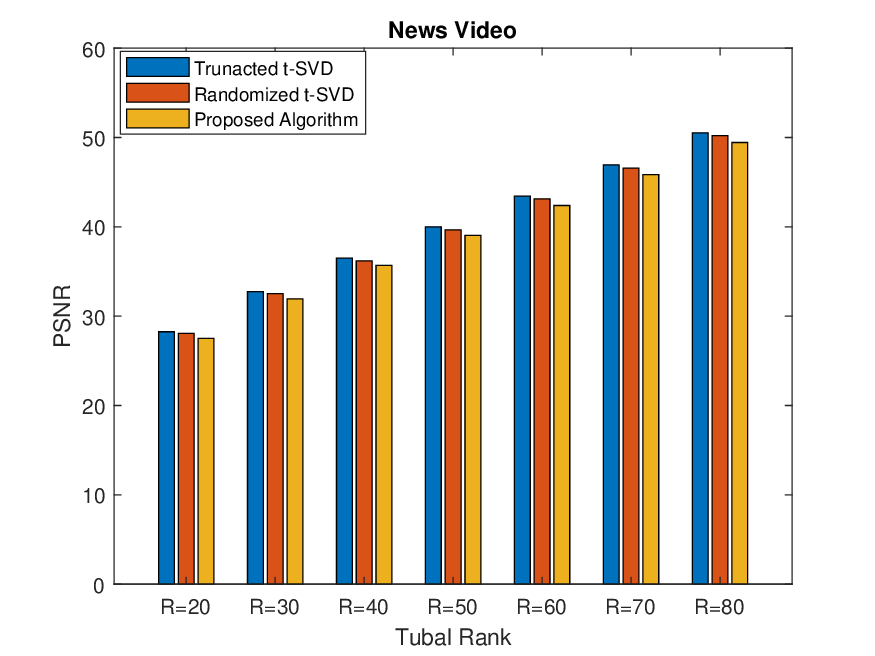}\includegraphics[width=0.53\columnwidth]{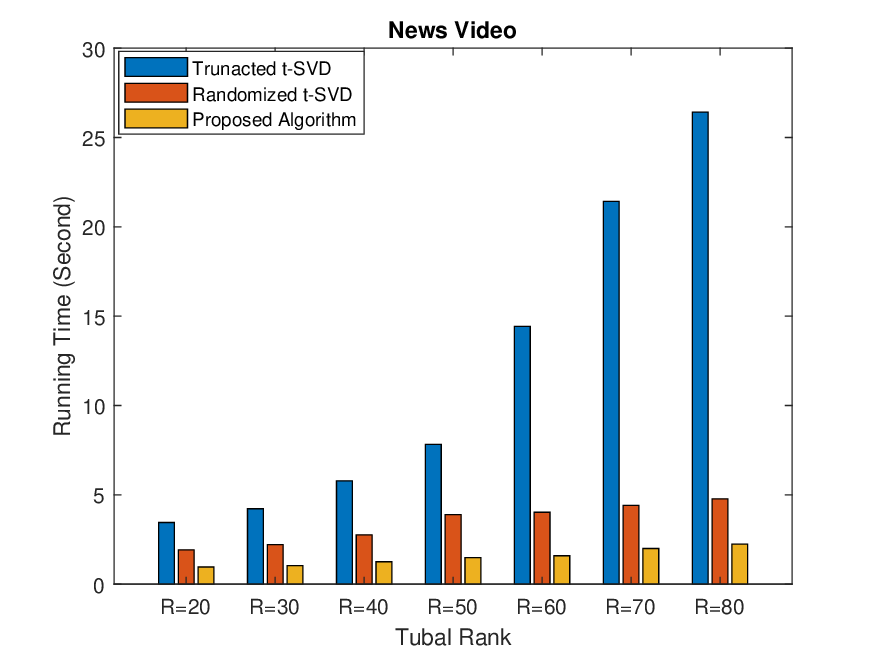}\\
\caption{\small{The running time and the mean of the PSNRs of the all reconstructed frames of the videos achieved by the truncated t-SVD, the randomized t-SVD and the proposed algorithm using different tubal
ranks \ref{salman_2}.}}\label{ex3_drm}
\end{center}
\end{figure*}

\begin{exa}\label{salman_com}
In this simulation we investigate he effectiveness of Algorithm \ref{ALg_PASEFF} for the tensor completion task described in Section \ref{Sec:APP}.
 In our simulations, we make use of both images and videos. First, we consider four color images taken from the well-known Kodak dataset (``Kodim03'', ``Kodim15'', ``Kodim16'', ``Kodim23''). The size of these images is $512\times 768\times 3,$ and we remove $80\%$ of the pixels randomly. Then apply the completion procedure described in Section \ref{Sec:APP} with the tubal rank $R=30$ and use Algorithm \ref{ALg_PASEFF} with two passes ($v=2$) and the oversampling parameter $P=10$. The reconstructed images and their PSNRs are displayed in Figure \ref{Im_1}. The results show the good performance of the proposed algorithm for the image completion task. A comparison between the proposed algorithm and Truncated t-SVD (Algorithm \ref{ALG:t-SVD}) and randomized t-SVD (Algorithm \ref{ALgRRM}) for the low tubal rank approximation is made and the results are shown in Table \ref{Table_Salman_imacom}. The experimental results clearly illustrate that the proposed algorithm can provide the recovered images with approximately the same accuracy but much faster.
 
 To compare the pass-efficient tensor-based (Algorithm \ref{ALg_PASEFF}) and the pass-efficient matrix-based algorithms (Algorithm \ref{ALg_2}), we consider the pepper image depicted in Figure \ref{Im_com_t_m} (first left) that is of size $256\times 256\times 3$ and we remove some pixels of the image in a structure way shown in the second left image. Then, we apply the completion procedure described in Section where the proposed pass-efficient tensor-based algorithm (Algorithm \ref{ALg_PASEFF}) was used for the operator $\mathcal{L}$. Also, we reshaped the image to a matrix and then apply the iterative procedures \eqref{Step1}-\ref{Step2} to it, in which the pass-efficient matrix-based algorithm (Algorithm \ref{ALg_2}) is used or the operator $\mathcal{L}$. These reconstructed images using two algorithms are visualized in Figure \ref{Im_com_t_m}. This indicates that due to the tensor
structure-preserving property, the proposed algorithm has overall better reconstruction quality. {The proposed algorithm for a given tubal rank $R$, provides close to optimal approximation, which is achieved by the truncated t-SVD. However, in some image restoration methods, like matrix factorization-based techniques, the rank ($R$) of a matrix or a tensor is a crucial parameter. This parameter represents the number of significant components or features used to model the image. Choosing an appropriate rank is essential, and it can significantly impact the quality of the restored image.  If the user provides an incorrect value for R (either too large or too small), it can negatively affect the quality of the restored image. A too small value might result in a loss of details, while a too large value may overfit the data, leading to artifacts and noise into the image.} 

{We used the idea known in signal processing, where one starts with a small tubal rank and gradually increases it until the quality of the image is not improved significantly\cite{zhao2015bayesian}. We should highlight that depending on the specific characteristics of the image and the degradation process, the initial tubal ranks may be different. For example, in our experiments reported in the paper, for images of size $512\times 768\times 3$ and $80\%$ of pixels removed, we started by the tubal rank of $25$ and gradually increased the tubal rank for which the tubal rank of $30$ provided the best reconstruction. Also, for the images with 90\% of pixels missing, we started with the tubal rank of $R=15$ and again gradually increased the tubal rank. Here the tubal rank $R=21$ provided the best recovery results. Please note that since the proposed randomized algorithm is fast, running it for several tubal ranks is not a concern and this is one of the advantages of the proposed algorithm. } 
 
 We next considered three videos ``Akiyo'', ``Foreman'', and ``News''\cite{WinNT}, which are third-order tensors of size  $176\times 144\times 300$ (collocation of $300$ frames or black and white images). We remove $70\%$ of the pixels of the mentioned videos randomly. With the same procedure described for the images, we used our proposed pass-efficient algorithm in the completion  stage \eqref{Step1}, with two passes ($v=2$), the oversampling parameter $P=10$, and the tubal rank $R=15$ to reconstruct the incomplete videos. The PSNR of all reconstructed frames of the ``Akiyo'' video are shown in Figure  \ref{Video_com_1} (upper). Also, the original, the observed, and the reconstruction of some frames are displayed in Figure \ref{Video_com_1} (bottom). The obtained results for the "News" and the "Foreman" videos are reported in Figures \ref{Video_com_2} and \ref{Video_com_3}, respectively. Similar results as the images were achieved for the case of videos and the proposed algorithms provided almost the same recovery performance as the baseline methods by in a faster time. These results are skipped in order to prevent duplication. This clearly indicates the efficiency of the proposed algorithm at delivering satisfactory results faster.
\end{exa}

\begin{figure*}
\begin{center}
\includegraphics[width=.65\columnwidth]{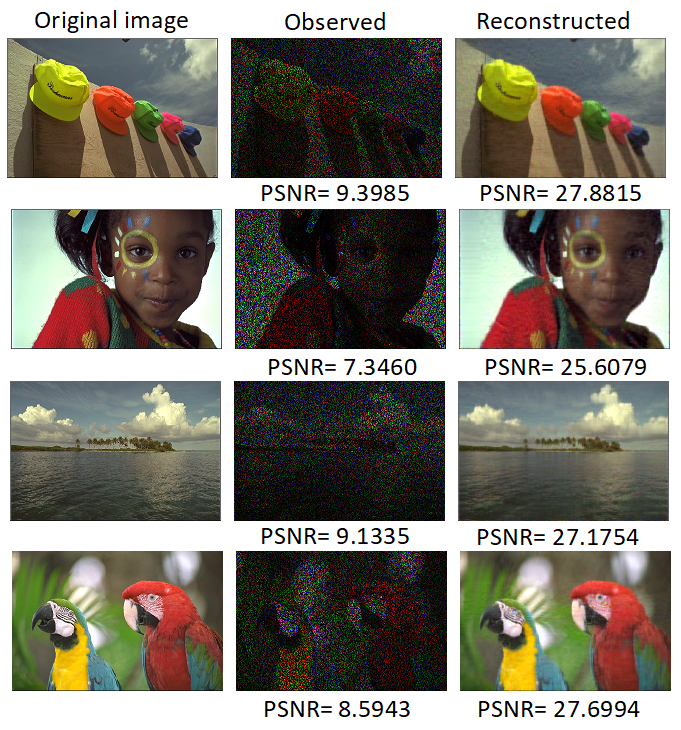}
\caption{\small{Comparing the original, the observed (with $80\%$ missing pixels) and the reconstructed images using the completion algorithm based on the proposed algorithm using two passes and the tubal rank $30$ for Example \ref{salman_com}.}}\label{Im_1}
\end{center}
\end{figure*}

\begin{table}
\begin{center}
\caption{Comparison of the running time and the PSNR achieved by the proposed algorithm, Algorithm \ref{ALG:t-SVD} and Algorithm \ref{ALgRRM} for Example \ref{salman_com}.}\label{Table_Salman_imacom}
\vspace{0.2cm}
{\smaller
\begin{tabular}{||c| c | c ||} 
 \multicolumn{3}{c}{Kodim03}\\
 \hline
Algorithms  & Running Time (Seconds) & PSNR \\
 \hline\hline
 Truncated t-SVD \cite{kilmer2011factorization,kilmer2013third} & 17.34  &  {\bf 28.01 }  \\ 
  Randomized t-SVD \cite{zhang2018randomized} &  10.03 &  27.93  \\ 
 Proposed algorithm  & {\bf 5.1}  &  27.88  \\
 \hline
 \multicolumn{3}{c}{Kodim15}\\
 \hline
 Algorithms  & Running Time (Seconds) & PSNR \\
 \hline\hline
 Truncated t-SVD \cite{kilmer2011factorization,kilmer2013third} &  20.13 &  {\bf 25.98}  \\ 
  Randomized t-SVD \cite{zhang2018randomized} &  9.84 &  25.75 \\ 
 Proposed algorithm  & {\bf 4.57}  &  25.60  \\
 \hline
 \multicolumn{3}{c}{Kodim16}\\
 \hline
 Algorithms  & Running Time (Seconds) & PSNR \\
 \hline\hline
 Truncated t-SVD \cite{kilmer2011factorization,kilmer2013third} & 19.62 &  {\bf 27.56}  \\ 
  Randomized t-SVD \cite{zhang2018randomized} & 11.05  &  27.34  \\ 
 Proposed algorithm  & {\bf 4.89}  &  27.17  \\
 \hline
 \multicolumn{3}{c}{Kodim23}\\
 \hline
 Algorithms  & Running Time (Seconds) & PSNR \\
 \hline\hline
 Truncated t-SVD \cite{kilmer2011factorization,kilmer2013third} & 19.45  &  {\bf 27.89}  \\ 
  Randomized t-SVD \cite{zhang2018randomized} &  11.43 &   27.75 \\ 
 Proposed algorithm  & {\bf 5.21}  &  27.69 \\\hline
\end{tabular}
}
\end{center}
\end{table}

\begin{figure*}
\begin{center}
\includegraphics[width=.75\columnwidth]{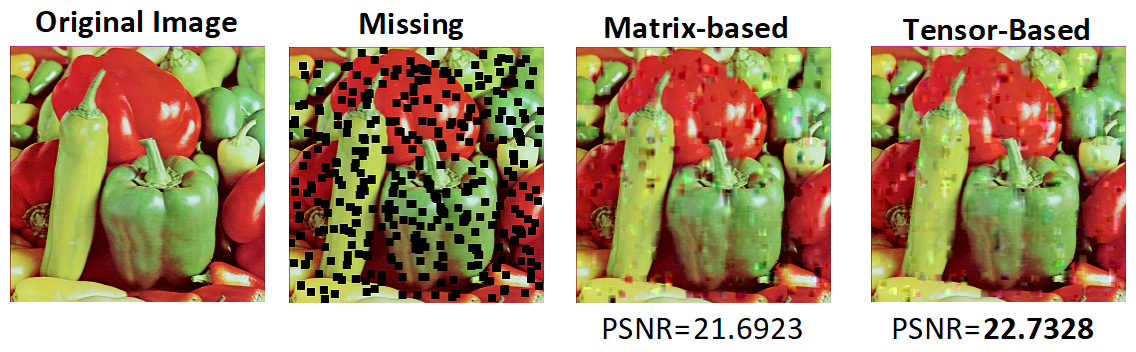}
\caption{\small{Comparing the matrix and tensor based completion algorithms. For the matrix based method the matrix rank $25$ and for the tensor based approach the tubal rank $25$ were used for Example \ref{salman_com}.}}\label{Im_com_t_m}
\end{center}
\end{figure*}

\begin{figure*}
\begin{center}
\includegraphics[width=.9\columnwidth]{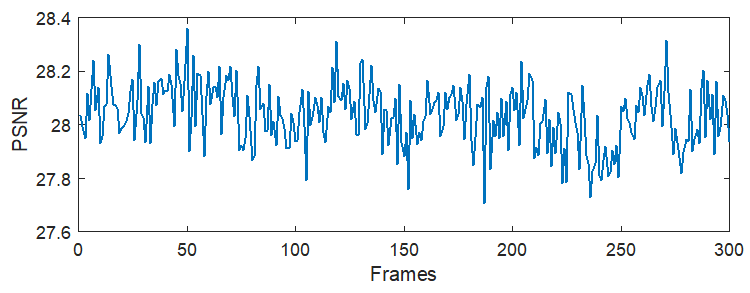}\\\includegraphics[width=.9\columnwidth]{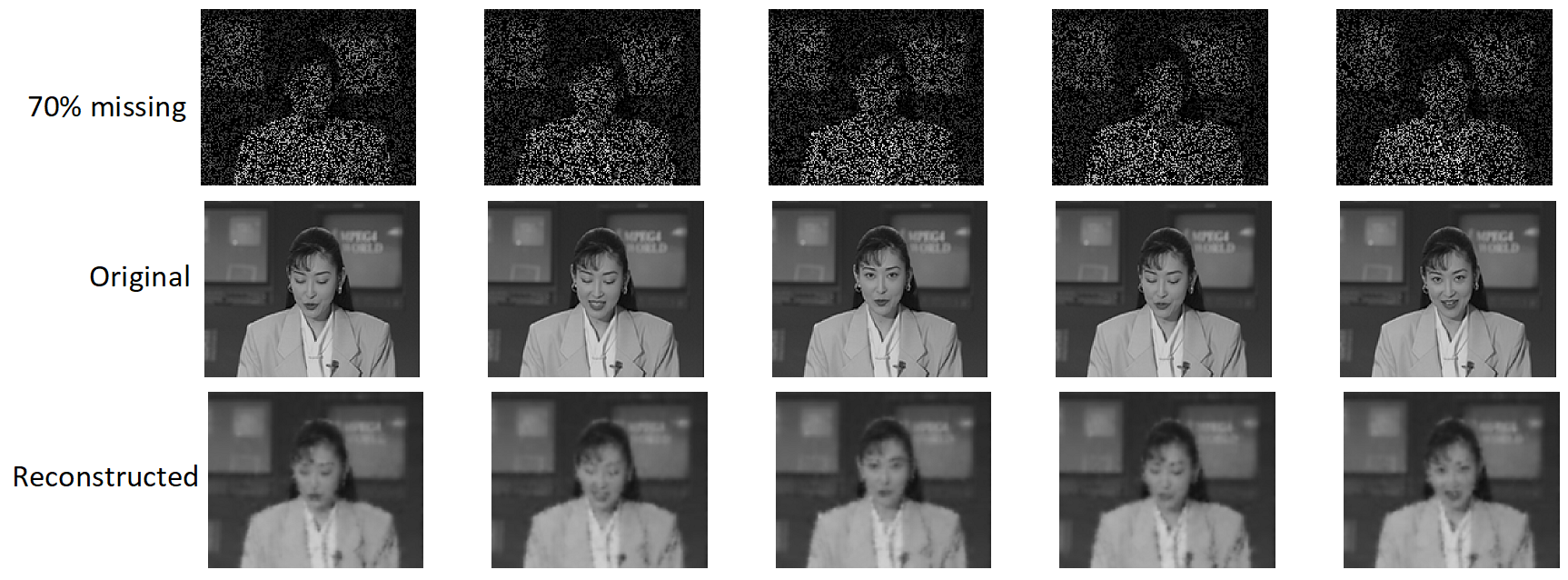}
\caption{\small{(Upper) The PSNR of all reconstructed frames of the ``Akiyo'' video using the
completion procedure combined by the proposed algorithm. The tubal rank $R=15$ and two passes (Bottom) Visualization of some random samples of the original, the observed ($70\%$ missing pixels) and the  reconstructed frames for Example \ref{salman_com}.}}\label{Video_com_1}
\end{center}
\end{figure*}

\begin{figure*}
\begin{center}
\includegraphics[width=.9\columnwidth]{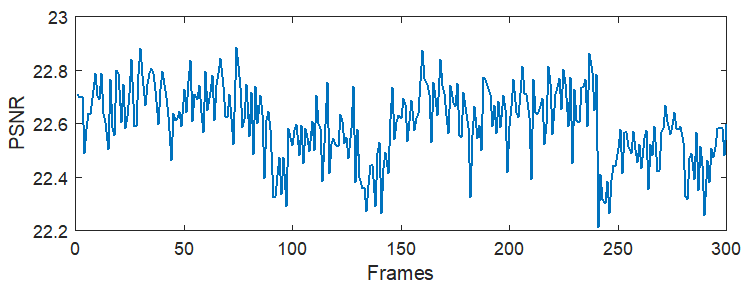}\\\includegraphics[width=.9\columnwidth]{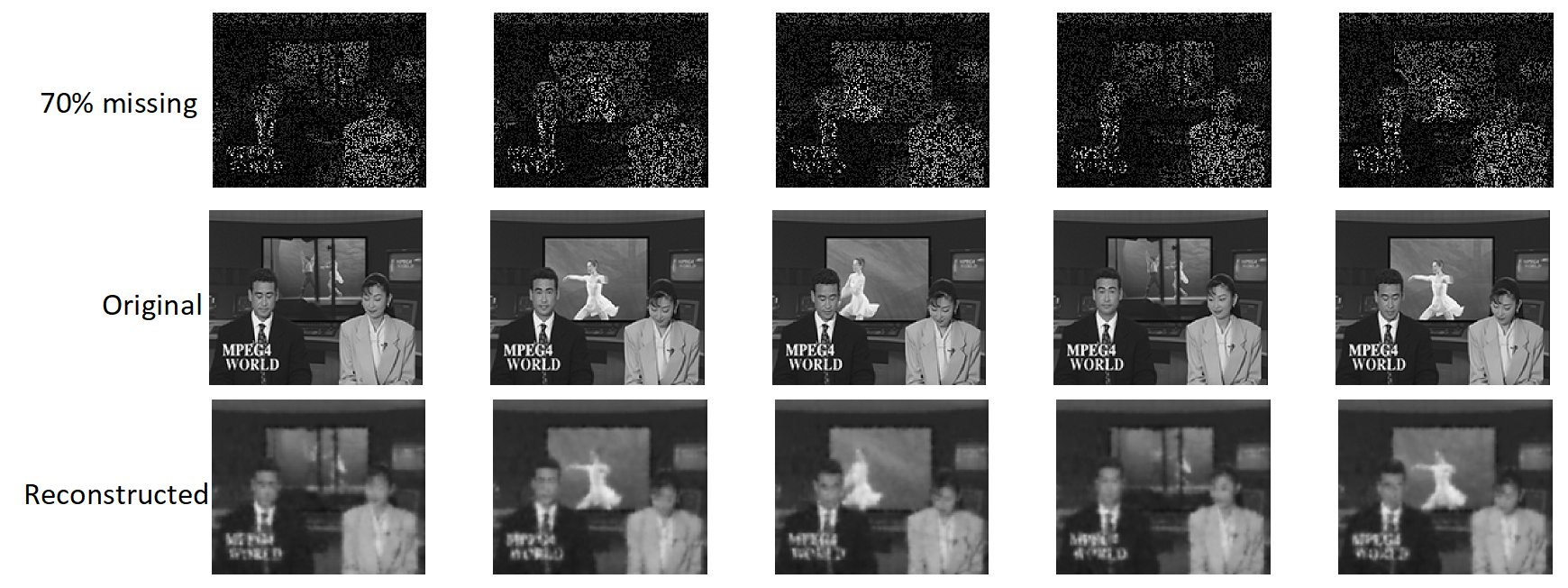}
\caption{\small{(Upper) The PSNR of all reconstructed frames of the ``News'' video using the
completion procedure combined by the proposed algorithm. The tubal rank $R=15$ and two passes (Bottom) Visualization of some random samples of the original, the observed ($70\%$ missing pixels) and the  reconstructed frames for Example \ref{salman_com}.}}\label{Video_com_2}
\end{center}
\end{figure*}

\begin{figure*}
\begin{center}
\includegraphics[width=.9\columnwidth]{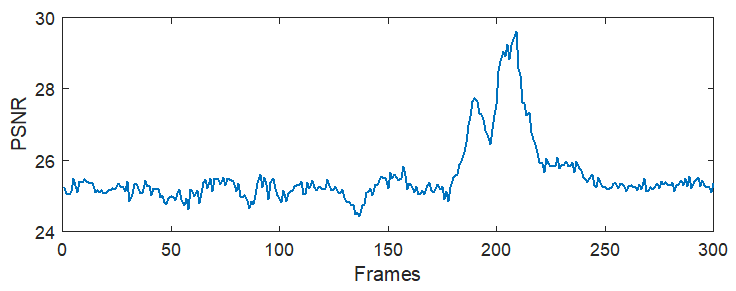}\\\includegraphics[width=.9\columnwidth]{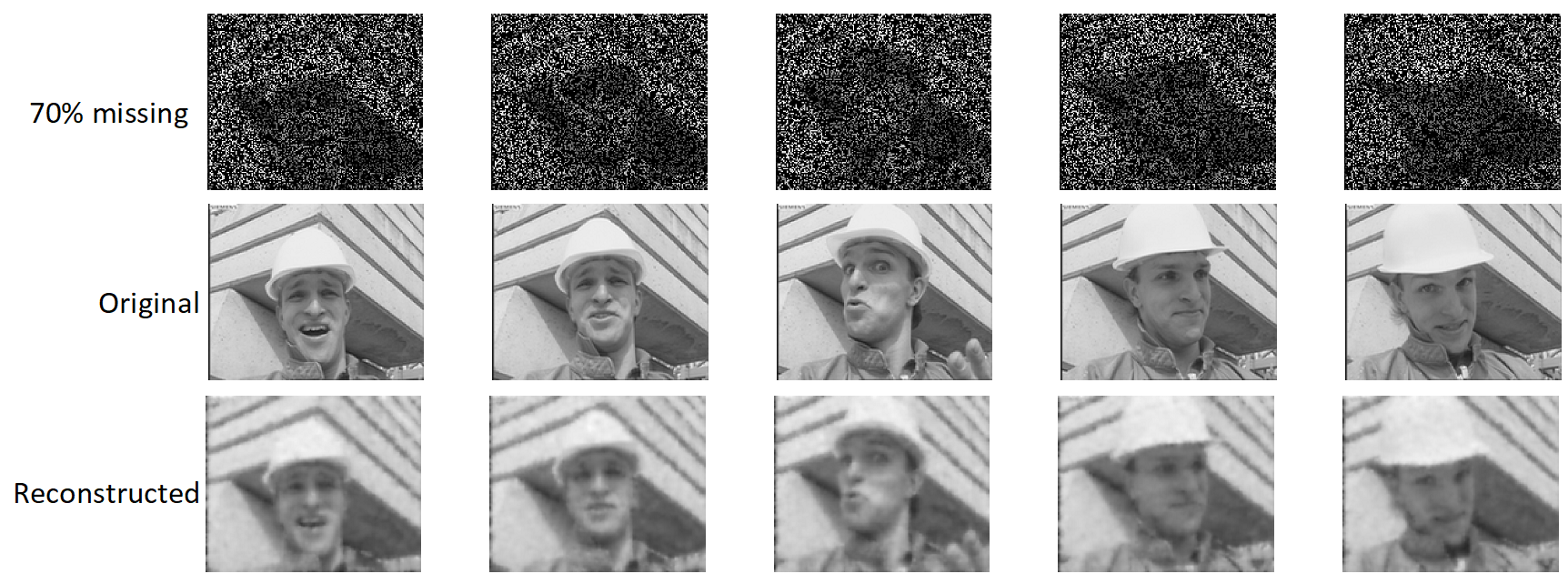}
\caption{\small{(Upper) The PSNR of all reconstructed frames of the ``Foreman'' video using the
completion procedure combined by the proposed algorithm. The tubal rank $R=15$ and two passes (Bottom) Visualization of some random samples of the original, the observed ($70\%$ missing pixels) and the reconstructed frames for Example \ref{salman_com}.}}\label{Video_com_3}
\end{center}
\end{figure*}


\section{Conclusion and future works}\label{Sec:Conclu}
In this paper, a pass-efficient randomized algorithm was proposed for the computation of the tensor SVD (t-SVD). Contrary to the classical randomized subspace algorithms, which need an even number of passes over the data tensor, it can find a low tubal rank approximation of a third-order tensor using an arbitrary number of passes. We applied the proposed algorithm for developing a fast completion method to reconstruct images and videos with missing pixels. The suggested approach is for real-valued tensors of third order, but it may be easily extended to higher-order complex tensors. The thorough simulation results demonstrated the feasibility and efficiency of the proposed algorithm. In future work, we plan to extend the block Krylov subspace algorithms to the tensor case based on the t-product and also propose efficient single-pass algorithms for the computation of the t-SVD.

\section{Acknowledgement} The authors would like to thank the editor and two reviewers for their constructive comments, which have greatly improved the quality of the paper. The
work was partially supported by the Ministry of Education and Science (grant 075.10.2021.068).

\section{Conflict of Interest Statement}
The author declares that he has no
conflict of interest with anything.

\section{Data Availability}
Data openly available in public repositories. The Kodak dataset is accessible at \url{https://www.kaggle.com/datasets/sherylmehta/kodak-dataset}

\section{Appendix} 
 The proof of Theorem \ref{thm_2} is almost the same as the proof of Theorem \ref{THMM_1} presented in \cite{zhang2018randomized}. In fact, they worked on the matrix $\left({\bf X}{\bf X}^T\right)^v{\bf X}$ and here we consider ${\bf Y}=\left({\bf X}^T{\bf X}\right)^v$. However, due to some tricky modifications, we provide the details.  

Let ${\bf X}=[{\bf U}_1,{\bf U}_2]\begin{bmatrix}
{\bf \Sigma}_1& {\bf 0}\\
{\bf 0}&{\bf \Sigma}_2
\end{bmatrix}\begin{bmatrix}
{\bf V}_1^T\\{\bf V}_2^T
\end{bmatrix}$
and define ${\bf \Omega}_1={\bf V}^T_1{\bf \Omega},\,\,{\bf \Omega}_2={\bf V}^T_2{\bf \Omega},$ where ${\bf \Sigma}_1$ and ${\bf \Sigma}_2$ are square. Now, from straightforward computations we have 
\begin{eqnarray*}
{\bf Y}=({\bf X}^T{\bf X})^v{\bf \Omega}=\begin{bmatrix}
{\bf V}_1&{\bf V}_2
\end{bmatrix}\begin{bmatrix}
{\bf \Sigma}^{2v}_1&{\bf 0} \\
{\bf 0}&{\bf \Sigma}^{2v}_2
\end{bmatrix}\begin{bmatrix}
{\bf V}^T_1\\{\bf V}^T_2
\end{bmatrix}
{\bf \Omega}=\\
\begin{bmatrix}
{\bf V}_1&{\bf V}_2
\end{bmatrix}
\begin{bmatrix}
{\bf \Sigma}^{2v}_1& {\bf 0}\\
{\bf 0}&{\bf \Sigma}^{2v}_2
\end{bmatrix}\begin{bmatrix}
{\bf \Omega}_1\\{\bf \Omega}_2
\end{bmatrix},
\end{eqnarray*}
and consider the orthogonal projector ${\bf P}_{\bf V}={\bf V}{\bf V}^T$.  

The next propositions are used in our subsequent analysis. 

\begin{pre} \cite{halko2011finding}\label{pre_1}
Suppose {\bf U} is unitary. Then ${\bf U}^T{\bf P}_{\bf M}{\bf U} = {\bf P}_{{\bf U}^T{\bf M}}$.
\end{pre}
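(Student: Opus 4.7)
The plan is to prove the identity by reducing the projector on both sides to its canonical $\mathbf{Q}\mathbf{Q}^T$ form coming from a thin QR factorization, and then exploiting unitarity of $\mathbf{U}$ to transfer this form through the conjugation. First, I would write the thin QR factorization $\mathbf{M}=\mathbf{Q}\mathbf{R}$, with $\mathbf{Q}$ having orthonormal columns spanning $\mathrm{range}(\mathbf{M})$, so that by definition $\mathbf{P}_{\mathbf{M}}=\mathbf{Q}\mathbf{Q}^T$. Then I would substitute and compute
\[
\mathbf{U}^T\mathbf{P}_{\mathbf{M}}\mathbf{U}=\mathbf{U}^T\mathbf{Q}\mathbf{Q}^T\mathbf{U}=(\mathbf{U}^T\mathbf{Q})(\mathbf{U}^T\mathbf{Q})^T.
\]

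The crux of the argument is the following two observations, each a one-line consequence of $\mathbf{U}$ being unitary (so that both $\mathbf{U}^T\mathbf{U}=\mathbf{I}$ and $\mathbf{U}\mathbf{U}^T=\mathbf{I}$). First, $\mathbf{U}^T\mathbf{Q}$ still has orthonormal columns, since $(\mathbf{U}^T\mathbf{Q})^T(\mathbf{U}^T\mathbf{Q})=\mathbf{Q}^T\mathbf{U}\mathbf{U}^T\mathbf{Q}=\mathbf{Q}^T\mathbf{Q}=\mathbf{I}$. Second, $\mathbf{U}^T\mathbf{M}=(\mathbf{U}^T\mathbf{Q})\mathbf{R}$ is therefore a valid thin QR-type factorization of $\mathbf{U}^T\mathbf{M}$, and the columns of $\mathbf{U}^T\mathbf{Q}$ form an orthonormal basis of $\mathrm{range}(\mathbf{U}^T\mathbf{M})$. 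Consequently, $\mathbf{P}_{\mathbf{U}^T\mathbf{M}}=(\mathbf{U}^T\mathbf{Q})(\mathbf{U}^T\mathbf{Q})^T$, which matches the right-hand side computed above and closes the argument.

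As a sanity check, I would also verify the identity by the alternative route of checking the three defining properties of an orthogonal projector directly for the matrix $\mathbf{U}^T\mathbf{P}_{\mathbf{M}}\mathbf{U}$: symmetry is immediate from $\mathbf{P}_{\mathbf{M}}^T=\mathbf{P}_{\mathbf{M}}$, idempotency reduces via $\mathbf{U}\mathbf{U}^T=\mathbf{I}$ to $\mathbf{P}_{\mathbf{M}}^2=\mathbf{P}_{\mathbf{M}}$, and the range identification follows from the containment $\mathbf{U}^T\mathbf{P}_{\mathbf{M}}\mathbf{U}\,\mathbf{x}=\mathbf{U}^T\mathbf{M}\,\mathbf{z}$ (since $\mathbf{P}_{\mathbf{M}}\mathbf{U}\mathbf{x}\in\mathrm{range}(\mathbf{M})$) together with the reverse containment $\mathbf{U}^T\mathbf{M}\mathbf{z}=\mathbf{U}^T\mathbf{P}_{\mathbf{M}}\mathbf{M}\mathbf{z}=\mathbf{U}^T\mathbf{P}_{\mathbf{M}}\mathbf{U}(\mathbf{U}^T\mathbf{M}\mathbf{z})$, again invoking $\mathbf{U}\mathbf{U}^T=\mathbf{I}$.

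The only real subtlety is making sure both the relations $\mathbf{U}^T\mathbf{U}=\mathbf{I}$ and $\mathbf{U}\mathbf{U}^T=\mathbf{I}$ are available; if $\mathbf{U}$ were merely a tall matrix with orthonormal columns, the second would fail and the identity would no longer hold in the stated form. Since the statement assumes $\mathbf{U}$ is unitary (hence square), no further care is needed, and the QR-factorization argument above is essentially a two-line proof.
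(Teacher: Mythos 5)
The paper does not actually prove this proposition: it is stated with a citation to Halko, Martinsson and Tropp and used as an imported fact, so there is no in-paper argument to compare against. Your proof is correct. The first route (conjugating the explicit form $\mathbf{P}_{\mathbf{M}}=\mathbf{Q}\mathbf{Q}^T$ and recognizing $(\mathbf{U}^T\mathbf{Q})(\mathbf{U}^T\mathbf{Q})^T$ as the projector onto $\mathrm{range}(\mathbf{U}^T\mathbf{M})$) and the second route (verifying symmetry, idempotency, and the range identity, then invoking the fact that an orthogonal projector is determined by its range) are both valid; the second is essentially the standard argument in the cited reference, and you should state explicitly that a symmetric idempotent is uniquely determined by its range, since that is the step that closes that version of the argument. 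One small point of care in the first route: for a rank-deficient $\mathbf{M}$ the plain thin QR factor $\mathbf{Q}$ need not span exactly $\mathrm{range}(\mathbf{M})$, so $\mathbf{Q}$ should be taken as an orthonormal basis of $\mathrm{range}(\mathbf{M})$ (e.g., from a column-pivoted QR or an SVD), which is how you have phrased it; with that reading, $\mathrm{range}(\mathbf{U}^T\mathbf{M})=\mathbf{U}^T\,\mathrm{range}(\mathbf{M})=\mathrm{range}(\mathbf{U}^T\mathbf{Q})$ follows from invertibility of $\mathbf{U}^T$, and the identity holds. Your closing remark that $\mathbf{U}\mathbf{U}^T=\mathbf{I}$ is genuinely needed (and would fail for a merely column-orthonormal $\mathbf{U}$) is accurate and worth keeping.
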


\begin{pre}\cite{halko2011finding}\label{pre_2}
Suppose ${\rm range}({\bf N}) \subset {\rm range}({\bf M})$. Then, for each matrix ${\bf A}$, it
holds that $\|{\bf P}_{\bf N} {\bf A}\|\leq\|{\bf P}_{\bf M}{\bf A}\|$ and that 
$\|({\bf I}-{\bf P}_{\bf M}){\bf A}\|\leq\|({\bf I}-{\bf P}_{\bf N}){\bf A}\|$.
\end{pre}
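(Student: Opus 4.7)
The plan is to exploit the basic composition identity for nested orthogonal projectors: if $\mathrm{range}({\bf N})\subset\mathrm{range}({\bf M})$, then ${\bf P}_{\bf M}{\bf P}_{\bf N}={\bf P}_{\bf N}$, and by symmetry (both projectors are self-adjoint) also ${\bf P}_{\bf N}{\bf P}_{\bf M}={\bf P}_{\bf N}$. The first identity is immediate, since for any ${\bf x}$ the vector ${\bf P}_{\bf N}{\bf x}$ lies in $\mathrm{range}({\bf N})\subset\mathrm{range}({\bf M})$, and ${\bf P}_{\bf M}$ fixes every vector in its range; the second follows by transposing the first using ${\bf P}_{\bf M}^{T}={\bf P}_{\bf M}$ and ${\bf P}_{\bf N}^{T}={\bf P}_{\bf N}$. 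These two tiny facts are the only real ingredients.

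For the first claimed inequality, I would write ${\bf P}_{\bf N}{\bf A}=({\bf P}_{\bf N}{\bf P}_{\bf M}){\bf A}={\bf P}_{\bf N}({\bf P}_{\bf M}{\bf A})$ and invoke submultiplicativity of the chosen (unitarily invariant) norm together with $\|{\bf P}_{\bf N}\|\le 1$ to obtain $\|{\bf P}_{\bf N}{\bf A}\|\le\|{\bf P}_{\bf M}{\bf A}\|$. The argument works uniformly for the spectral and Frobenius norms, which is what is needed in the surrounding analysis.

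For the second inequality, the key algebraic step is the complementary identity
\begin{equation*}
({\bf I}-{\bf P}_{\bf M})=({\bf I}-{\bf P}_{\bf M})({\bf I}-{\bf P}_{\bf N}),
\end{equation*}
which follows from expanding the right-hand side as ${\bf I}-{\bf P}_{\bf N}-{\bf P}_{\bf M}+{\bf P}_{\bf M}{\bf P}_{\bf N}$ and canceling the ${\bf P}_{\bf N}$ terms via ${\bf P}_{\bf M}{\bf P}_{\bf N}={\bf P}_{\bf N}$. Applying this to ${\bf A}$ and using submultiplicativity together with $\|{\bf I}-{\bf P}_{\bf M}\|\le 1$ yields $\|({\bf I}-{\bf P}_{\bf M}){\bf A}\|\le\|({\bf I}-{\bf P}_{\bf N}){\bf A}\|$.

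There is essentially no substantive obstacle here; the only delicate point is notational, namely being explicit that the norm in the statement is unitarily invariant (spectral or Frobenius) so that $\|{\bf P}\|\le 1$ for every orthogonal projector and submultiplicativity applies, and that ${\bf P}_{\bf M},{\bf P}_{\bf N}$ denote orthogonal (not oblique) projectors so that the self-adjointness used in deriving ${\bf P}_{\bf N}{\bf P}_{\bf M}={\bf P}_{\bf N}$ is justified. Once these conventions are pinned down, the proof is just the two one-line estimates above.
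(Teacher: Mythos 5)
Your proof is correct and is essentially the standard argument behind this statement: the paper offers no proof of its own here (it simply cites \cite{halko2011finding}), and the composition identities ${\bf P}_{\bf M}{\bf P}_{\bf N}={\bf P}_{\bf N}$ and $({\bf I}-{\bf P}_{\bf M})({\bf I}-{\bf P}_{\bf N})={\bf I}-{\bf P}_{\bf M}$ are exactly how that reference proves it. One small nit: for the Frobenius case the contraction step should be justified by the mixed bound $\|{\bf P}_{\bf N}{\bf B}\|_F\le\|{\bf P}_{\bf N}\|_2\,\|{\bf B}\|_F$ (the \emph{spectral} norm of an orthogonal projector is at most $1$, while its Frobenius norm is $\sqrt{\operatorname{rank}}$), rather than by submultiplicativity of the chosen norm applied to the projector itself.
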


To prove Theorem \ref{thm_2}, we need to first prove the following theorem.

\begin{thm}\label{imtheo}
Let ${\bf X}\in\mathbb{R}^{I\times J}$ with the SVD $\,\,{\bf X}={\bf U}{\bf \Sigma}{\bf V}^T$. Choose ${\bf \Omega}_1$ and ${\bf \Omega}_2$ as above and assume that ${\bf \Omega}_1$ is of full rank. Compute the ${\bf Q}$ using Algorithm \ref{ALg_2} for an odd number of passes $v$, then the approximation error satisfies 
\begin{eqnarray}\label{EQprinc}
\|{\bf X}({\bf I}-{\bf Q}{\bf Q}^T)\|_F^2\leq\|{\bf \Sigma}_2\|_F^2+{\tau}^{2(2v-1)}\|{{\bf\Sigma}_2}{\bf \Omega}_2{\bf \Omega}_1^{\dag}\|_F^2.
\end{eqnarray}
\end{thm}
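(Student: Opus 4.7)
My plan is to follow the Halko--Martinsson--Tropp-style deterministic analysis that underlies Theorem \ref{THMM_1}, adapted to the present situation where the sketch ${\bf Y}=({\bf X}^T{\bf X})^v{\bf \Omega}$ sits on the row side of ${\bf X}$ rather than on the column side. First, I would substitute the SVD ${\bf X}={\bf U}{\bf \Sigma}{\bf V}^T$ and use unitary invariance of the Frobenius norm together with Proposition \ref{pre_1} to reduce the quantity of interest to a diagonal one,
\begin{equation*}
\|{\bf X}({\bf I}-{\bf Q}{\bf Q}^T)\|_F^2=\|{\bf \Sigma}({\bf I}-{\bf P}_{\widetilde{{\bf Y}}})\|_F^2,\qquad \widetilde{{\bf Y}}={\bf V}^T{\bf Y}=\begin{bmatrix}{\bf \Sigma}_1^{2v}{\bf \Omega}_1\\ {\bf \Sigma}_2^{2v}{\bf \Omega}_2\end{bmatrix}.
\end{equation*}

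Next, I would apply the standard flattening trick by right-multiplying $\widetilde{{\bf Y}}$ by $({\bf \Sigma}_1^{2v}{\bf \Omega}_1)^\dagger={\bf \Omega}_1^\dagger{\bf \Sigma}_1^{-2v}$, which is legitimate because ${\bf \Omega}_1$ has full row rank and ${\bf \Sigma}_1$ is invertible under the singular-value-gap hypothesis. The result is the block matrix ${\bf W}=\begin{bmatrix}{\bf I}_R\\ {\bf F}\end{bmatrix}$ with ${\bf F}={\bf \Sigma}_2^{2v}{\bf \Omega}_2{\bf \Omega}_1^\dagger{\bf \Sigma}_1^{-2v}$, and clearly $\mathrm{range}({\bf W})\subseteq\mathrm{range}(\widetilde{{\bf Y}})$. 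Proposition \ref{pre_2} then delivers
\begin{equation*}
\|{\bf \Sigma}({\bf I}-{\bf P}_{\widetilde{{\bf Y}}})\|_F^2\leq\|{\bf \Sigma}({\bf I}-{\bf P}_{{\bf W}})\|_F^2,
\end{equation*}
so it suffices to bound the Frobenius error against the explicit projector ${\bf P}_{{\bf W}}={\bf W}({\bf I}+{\bf F}^T{\bf F})^{-1}{\bf W}^T$.

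For that step I would use Pythagoras, $\|{\bf \Sigma}({\bf I}-{\bf P}_{{\bf W}})\|_F^2=\|{\bf \Sigma}\|_F^2-\mathrm{tr}({\bf \Sigma}^T{\bf \Sigma}\,{\bf P}_{{\bf W}})$; insert the block form ${\bf \Sigma}^T{\bf \Sigma}=\mathrm{diag}({\bf \Sigma}_1^2,{\bf \Sigma}_2^2)$; apply the Woodbury identity ${\bf I}-{\bf F}({\bf I}+{\bf F}^T{\bf F})^{-1}{\bf F}^T=({\bf I}+{\bf F}{\bf F}^T)^{-1}$ together with the pushthrough ${\bf F}^T{\bf F}({\bf I}+{\bf F}^T{\bf F})^{-1}={\bf F}^T({\bf I}+{\bf F}{\bf F}^T)^{-1}{\bf F}$; and rewrite the result as $\mathrm{tr}\bigl({\bf F}{\bf \Sigma}_1^2{\bf F}^T({\bf I}+{\bf F}{\bf F}^T)^{-1}\bigr)+\mathrm{tr}\bigl({\bf \Sigma}_2^2({\bf I}+{\bf F}{\bf F}^T)^{-1}\bigr)$. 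Since ${\bf F}{\bf \Sigma}_1^2{\bf F}^T$ and ${\bf \Sigma}_2^2$ are positive semidefinite and $\|({\bf I}+{\bf F}{\bf F}^T)^{-1}\|_{\mathrm{op}}\leq 1$, each trace is dominated by $\mathrm{tr}({\bf F}{\bf \Sigma}_1^2{\bf F}^T)=\|{\bf F}{\bf \Sigma}_1\|_F^2$ and $\mathrm{tr}({\bf \Sigma}_2^2)=\|{\bf \Sigma}_2\|_F^2$, respectively, collapsing the bound to $\|{\bf F}{\bf \Sigma}_1\|_F^2+\|{\bf \Sigma}_2\|_F^2$.

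Finally I would pull out the singular-value gap by factoring ${\bf F}{\bf \Sigma}_1={\bf \Sigma}_2^{2v-1}\,({\bf \Sigma}_2{\bf \Omega}_2{\bf \Omega}_1^\dagger)\,{\bf \Sigma}_1^{-(2v-1)}$ and invoking sub-multiplicativity $\|{\bf A}{\bf B}{\bf C}\|_F\leq\|{\bf A}\|_{\mathrm{op}}\|{\bf B}\|_F\|{\bf C}\|_{\mathrm{op}}$ with the diagonal estimates $\|{\bf \Sigma}_2^{2v-1}\|_{\mathrm{op}}\leq\sigma_{R+1}^{2v-1}$ and $\|{\bf \Sigma}_1^{-(2v-1)}\|_{\mathrm{op}}\leq\sigma_R^{-(2v-1)}$. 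This produces $\|{\bf F}{\bf \Sigma}_1\|_F^2\leq\tau^{2(2v-1)}\|{\bf \Sigma}_2{\bf \Omega}_2{\bf \Omega}_1^\dagger\|_F^2$, which chained with the earlier inequalities gives \eqref{EQprinc}. The main obstacle I anticipate is the Woodbury-plus-pushthrough bookkeeping in the third step and, more subtly, keeping the exponent $2v-1$ rather than $2v$: one factor of ${\bf \Sigma}_1^{-1}$ gets absorbed when the trailing ${\bf \Sigma}_1$ on the right of ${\bf F}$ is folded into the interior group ${\bf \Sigma}_2{\bf \Omega}_2{\bf \Omega}_1^\dagger$, so only $2v-1$ copies of the gap ratio $\tau=\sigma_{R+1}/\sigma_R$ survive in the final estimate.
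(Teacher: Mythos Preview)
Your proposal is correct and follows essentially the same Halko--Martinsson--Tropp deterministic route as the paper: reduce via Proposition~\ref{pre_1} to $\|{\bf \Sigma}({\bf I}-{\bf P}_{{\bf V}^T{\bf Y}})\|_F$, flatten to ${\bf W}=\begin{bmatrix}{\bf I}\\ {\bf F}\end{bmatrix}$ using ${\bf \Omega}_1^\dagger{\bf \Sigma}_1^{-2v}$, invoke Proposition~\ref{pre_2}, bound the two blocks of $({\bf I}-{\bf P}_{\bf W}){\bf \Sigma}^T$ by $\|{\bf F}{\bf \Sigma}_1\|_F^2$ and $\|{\bf \Sigma}_2\|_F^2$, and finally factor ${\bf F}{\bf \Sigma}_1={\bf \Sigma}_2^{2v-1}({\bf \Sigma}_2{\bf \Omega}_2{\bf \Omega}_1^\dagger){\bf \Sigma}_1^{-(2v-1)}$. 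The only cosmetic difference is that you carry out the projector bound via the Pythagoras/Woodbury/pushthrough trace identities, whereas the paper quotes the explicit block formula for $({\bf I}-{\bf P}_{\bf Z}){\bf \Sigma}^T$ from \cite{halko2011finding}; the two computations are equivalent.
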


\begin{proof}
Define matrices ${\bf Z}$ and ${\bf F}$ as 
\begin{eqnarray}
{\bf Z}={\bf V}^T{\bf Y}{\bf \Omega}_1^{\dag}{\bf \Sigma}_1^{-2v}=\begin{bmatrix}
{\bf I}\\{\bf F}
\end{bmatrix},\quad
{\bf F}\equiv{\bf \Sigma}_2^{2v}{\bf \Omega}_2{\bf \Omega}_1^{\dag}{\bf \Sigma}^{-2v}_1.
\end{eqnarray}
From the construction of ${\bf Z}$, we have
\begin{equation}\label{Ident}
{\rm Range}({\bf Z})\subset{\rm Range}({\bf Y})={\rm Range}({\bf V}^T{\bf Y})={\rm Range}({\bf V}^T{\bf Q}).
\end{equation}
It is not difficult to see 
\begin{eqnarray}\label{EQQ1}
\nonumber
\|{\bf X}({\bf I}-{\bf Q}{\bf Q}^T)\|_F^2=\|{\bf U}{\bf \Sigma}{\bf V}^T({\bf I}-{\bf P}_{\bf Q})\|_F^2=\\
\|{\bf \Sigma}{\bf V}^T({\bf I}-{\bf P}_{\bf Q})\|_F^2,
\end{eqnarray}
and from \eqref{Ident}-\ref{EQQ1} together with Propositions \ref{pre_1} and \ref{pre_2} , we have
\begin{eqnarray}
\nonumber
\|{\bf \Sigma}{\bf V}^T({\bf I}-{\bf P}_{\bf Q})\|_F^2=\|{\bf \Sigma}{\bf V}^T({\bf I}-{\bf P}_{\bf Q}){\bf V}{\bf \Sigma}^T\|_F\leq\\
\|{\bf \Sigma}({\bf I}-{\bf P}_{{\bf V}^T\bf Q}){\bf \Sigma}^T\|_F= 
\|({\bf I}-{\bf P}_{\bf Z}){\bf \Sigma}^T\|^2_F.
\end{eqnarray}
Similar to \cite{halko2011finding,zhang2018randomized}, we can prove 
\[
({\bf I}-{\bf P}_{\bf Z}){\bf \Sigma}^T=\begin{bmatrix}
({\bf I}+{\bf F}^T{\bf F})^{-1}{\bf F}^T{\bf F}{\bf \Sigma}_1\\
({\bf I}-{\bf F}({\bf I}+{\bf F}^T{\bf F})^{-1}){\bf F}^T{\bf \Sigma}_2
\end{bmatrix},
\]
and so we come at 
\begin{eqnarray}\label{Tterm}
\nonumber
\|({\bf I}-{\bf P}_{\bf Z}){\bf \Sigma}^T\|^2_F=\|({\bf I}+{\bf F}^T{\bf F})^{-1}{\bf F}^T{\bf F}{\bf \Sigma}_1\|_F^2+\\ \|({\bf I}-{\bf F}({\bf I}+{\bf F}^T{\bf F})^{-1}){\bf F}^T{\bf \Sigma}_2\|_F^2.
\end{eqnarray}
Now, we bound two terms of \eqref{Tterm}. For the first term, consider 
\begin{eqnarray}\label{EQQ3}
\nonumber
\|({\bf I}+{\bf F}^T{\bf F})^{-1}{\bf F}^T{\bf F}{\bf \Sigma}_1\|_F^2\leq\|{\bf F}({\bf I}+{\bf F}^T{\bf F})^{-1}\|_2\|{\bf F}{\bf \Sigma}_1\|_F\\\leq\|{\bf F}{\bf \Sigma}_1\|_F,
\end{eqnarray}
and for the second term, we get
\begin{eqnarray}\label{EQQ4}
\|({\bf I}-{\bf F}({\bf I}+{\bf F}^T{\bf F})^{-1}){\bf F}^T{\bf \Sigma}_2\|_F^2\leq\|{\bf \Sigma}_2\|^2_F,
\end{eqnarray}
because ${\bf I}-{\bf F}({\bf I}+{\bf F}^T{\bf F})^{-1}){\bf F}^T\preceq{\bf I}$, (see \cite{halko2011finding} for the proof).
From \eqref{EQQ1}, \eqref{Tterm}, \eqref{EQQ3} and \eqref{EQQ4}, we have
\begin{eqnarray}\label{eq_}
\|{\bf X}({\bf I}-{\bf P}_{\bf Q})\|^2_F\leq\|{\bf \Sigma}({\bf I}-{\bf P}_{\bf Z})\|^2_F\leq\|{\bf \Sigma}_2\|_F^2+\|{\bf F}{\bf \Sigma}_1\|_F^2.
\end{eqnarray}
It is seen that ${\bf F}{\bf \Sigma}_1={\bf\Sigma}_2^{2v-1}({\bf\Sigma}_2{\bf\Omega}_2{\bf\Omega}_1^{\dag}){\bf\Sigma}_1^{-2v+1}$ and with some straightforward computations we get
\begin{eqnarray}\label{EQQ5}
\nonumber
\|{\bf F}{\bf \Sigma}_1\|_F\leq\|{\bf\Sigma}_2^{2v-1}\|_2\|{\bf\Sigma}_1^{-(2v-1)}\|_2\|{\bf\Sigma}_2{\bf\Omega}_2{\bf\Omega}_1^{\dag}\|_F\leq\\
{\tau}^{(2v-1)}\|{\bf\Sigma}_2{\bf\Omega}_2{\bf\Omega}_1^{\dag}\|_F.
\end{eqnarray}
From \eqref{EQQ4}- \eqref{EQQ5}, we can conclude the identity \eqref{EQprinc}.
\end{proof}

{\bf Proof of Theorem \ref{thm_2}.} Combining Theorem \ref{imtheo} and Theorem \ref{halko}, the desired result can be achieved.

\begin{thm}\label{halko}
 (average Frobenius error) \cite{halko2011finding}. Suppose that A is a real $m \times n$
matrix with singular values $\sigma_1 \geq \sigma_2 \geq \sigma_3 \geq \cdots$. Choose a target rank $k \geq 2$ and
an oversampling parameter $p \geq 2$, where $k + p \geq \min\{m, n\}$. Draw an $n \times (k + p)$
standard Gaussian matrix ${\bf \Omega}$, and construct the sample matrix ${\bf Y} = {\bf A}{\bf \Omega}$. Then the
expected approximation error
$\mathbb{E}(\|{\bf I} - {\bf P}_{\bf Y} ){\bf A}\|_F\leq(1+\frac{k}{p-1})^{1/2}(\Sigma_{j>k}\sigma_j^2)^{1/2}$
.
\end{thm}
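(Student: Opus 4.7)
\textbf{Proof strategy for Theorem \ref{thm_4}.} The plan is to transport the argument to the Fourier domain, where the t-product, the t-QR decomposition, and the projection $\cQ*\cQ^T$ all decouple into block-diagonal slice-wise operations on the frontal slices of $\widehat{\cX}=\mathrm{fft}(\cX,[\,],3)$, and then to invoke the already-proven matrix bound (Theorem \ref{thm_2}) on each slice.

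First, I would observe that applying $\mathrm{fft}(\cdot,[\,],3)$ to every tensor variable produced by Algorithm \ref{ALg_PASEFF} converts each t-product $\cX*\cQ^{(1)}$ into the collection of independent matrix products $\widehat{\cX}(:,:,i)\,\widehat{\cQ^{(1)}}(:,:,i)$ and each t-QR decomposition into the collection of independent QR decompositions of those slices. Consequently, for each $i\in\{1,\dots,I_3\}$, the Fourier trajectory $\widehat{\cQ^{(1)}}(:,:,i),\widehat{\cQ^{(2)}}(:,:,i)$ is exactly the trajectory of Algorithm \ref{ALg_2} applied to the matrix $\widehat{\cX}(:,:,i)$ with initial Gaussian matrix $\widehat{\Ome}(:,:,i)$. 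Because $\Ome={\rm randn}(I_2,R+P,I_3)$ has i.i.d.\ real Gaussian entries, the linearity of the FFT ensures that each first-half slice $\widehat{\Ome}(:,:,i)$ is a (scaled) complex Gaussian matrix with independent real and imaginary parts, so the slice-level hypotheses of Theorem \ref{thm_2} are satisfied; the remaining slices are conjugates of the first-half ones and produce identical contributions.

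Second, I would use the Parseval-type identity \eqref{eq_fou} on the residual $\cX-\cX*\cQ*\cQ^T$, combined with the fact that $\widehat{\cX*\cQ*\cQ^T}(:,:,i)=\widehat{\cX}(:,:,i)\widehat{\cQ}(:,:,i)\widehat{\cQ}(:,:,i)^{*}$, to obtain
\[
\|\cX-\cX*\cQ*\cQ^T\|_F^2 = \frac{1}{I_3}\sum_{i=1}^{I_3}\Bigl\|\widehat{\cX}(:,:,i)\bigl(\I-\widehat{\cQ}(:,:,i)\,\widehat{\cQ}(:,:,i)^{*}\bigr)\Bigr\|_F^2.
\]
Taking expectations, exchanging with the finite sum, and applying Theorem \ref{thm_2} to each slice (with singular values $\widetilde{\sigma}_j^{(i)}$, gap $\widetilde{\tau}_R^{(i)}$, and odd parameter $v$) yields
\[
\mathbb{E}\bigl(\|\cX-\cX*\cQ*\cQ^T\|_F^2\bigr) \leq \frac{1}{I_3}\sum_{i=1}^{I_3}\Bigl(1+\tfrac{R}{P-1}(\widetilde{\tau}_R^{(i)})^{2(2v-1)}\Bigr)\sum_{j>R}(\widetilde{\sigma}_j^{(i)})^2.
\]
Finally, since the theorem bounds $\mathbb{E}(\|\cdot\|_F)$ rather than $\mathbb{E}(\|\cdot\|_F^2)$, I would apply Jensen's inequality $\mathbb{E}(\sqrt{Y})\leq\sqrt{\mathbb{E}(Y)}$ with $Y=\|\cX-\cX*\cQ*\cQ^T\|_F^2$ to obtain the claimed square-root bound.

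The main obstacle is justifying that Theorem \ref{thm_2}, proven for real Gaussian inputs, applies verbatim to the complex Gaussian slice-inputs produced by the FFT. This is the same hurdle resolved in \cite{zhang2018randomized} for Theorem \ref{thm_3}: the complex version of the Halko--Martinsson--Tropp lemma that underlies Theorem \ref{thm_2} (together with the orthogonal projector identities of Propositions \ref{pre_1} and \ref{pre_2}) carries over with only notational changes, because the trace inequalities and unitary-invariance arguments used in the appendix do not distinguish between the real and complex Gaussian distributions on the starting matrix. A secondary technicality is the conjugate symmetry of $\widehat{\cX}$ and $\widehat{\Ome}$ for $i>\lceil(I_3+1)/2\rceil$; since paired slices contribute equal Frobenius errors, the uniform sum over $i=1,\dots,I_3$ in the final bound is already the correct account without double counting.
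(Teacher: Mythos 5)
Your proposal does not address the statement you were asked to prove. The statement is Theorem \ref{halko}, the Halko--Martinsson--Tropp average Frobenius error bound for the \emph{basic} randomized range finder applied to a real matrix ${\bf A}$ with a single Gaussian sketch ${\bf Y}={\bf A}{\bf \Gamma}$ --- no power iteration, no passes, no tensors. The paper offers no proof of this result; it is quoted verbatim from \cite{halko2011finding} and used as an ingredient. What you have written is instead a proof sketch of Theorem \ref{thm_4} (the tensor-level, pass-efficient bound), obtained by decoupling Algorithm \ref{ALg_PASEFF} into frontal slices in the Fourier domain, applying Theorem \ref{thm_2} slice-wise, and finishing with the Parseval identity \eqref{eq_fou} and Jensen's inequality. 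That is essentially the paper's own argument for Theorem \ref{thm_4}, but it is a different theorem.

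Worse, the argument cannot be repurposed as a proof of Theorem \ref{halko} because the logical dependency runs the other way: in the paper, Theorem \ref{thm_2} is proved by combining the deterministic bound of Theorem \ref{imtheo} with Theorem \ref{halko} itself, and Theorem \ref{thm_4} then rests on Theorem \ref{thm_2}. Invoking Theorem \ref{thm_2} to establish Theorem \ref{halko} would therefore be circular. A genuine proof of Theorem \ref{halko} requires independent machinery from \cite{halko2011finding}: first the deterministic estimate
$\|({\bf I}-{\bf P}_{\bf Y}){\bf A}\|_F^2\leq\|{\bf \Sigma}_2\|_F^2+\|{\bf \Sigma}_2{\bf \Omega}_2{\bf \Omega}_1^{\dag}\|_F^2$
(with ${\bf \Omega}_1={\bf V}_1^T{\bf \Gamma}$, ${\bf \Omega}_2={\bf V}_2^T{\bf \Gamma}$), then the Gaussian expectation identity
$\mathbb{E}\,\|{\bf \Sigma}_2{\bf \Omega}_2{\bf \Omega}_1^{\dag}\|_F^2=\frac{k}{p-1}\|{\bf \Sigma}_2\|_F^2$,
which relies on the independence of ${\bf \Omega}_1$ and ${\bf \Omega}_2$ and the expected Frobenius norm of the pseudo-inverse of a $(k)\times(k+p)$ Gaussian matrix, and finally H\"older's (Jensen's) inequality to pass from the second moment to $\mathbb{E}\,\|({\bf I}-{\bf P}_{\bf Y}){\bf A}\|_F$. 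None of these steps appears in your proposal.
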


{\bf Proof of Theorem \ref{thm_4}.} 
From the linearity of the expectation operator and relation \eqref{eq_fou} we have 
\begin{eqnarray}\label{Summ_1}
\nonumber
\mathbb{E}\,\left(\|\cX-\cX*\cQ*\cQ^T\|_F^2\right)\leq\hspace*{2cm}\\
\frac{1}{I_3}\left(\sum_{i=1}^{I_3}\mathbb{E}\,\|\widehat{\bf X}^{(i)}-\widehat{\bf X}^{(i)}\widehat{\bf Q}^{(i)}\widehat{\bf Q}^{(i)\,T}\|_F^2\right),
\end{eqnarray}
where $\widehat{\bf X}^{(i)}=\widehat{\cX}(:,:,i)$ and $\widehat{\bf Q}^{(i)}=\widehat{\cQ}(:,:,i)$.
We can now use Theorem \ref{thm_2} to bound each term of summation \eqref{Summ_1} as follows
\begin{eqnarray*}
\mathbb{E}\left(\|\widehat{\bf X}^{(i)}-\widehat{\bf X}^{(i)}\widehat{\bf Q}^{(i)}\widehat{\bf Q}^{(i)\,T}\|_F^2\right)\leq\hspace*{2cm}\\
\frac{1}{I_3}\left(1+\frac{R}{P-1}(\tau^{(i)}_R)^{2(2q-1)}\right)\left(\sum_{j>R}(\widehat{\sigma}^{(i)}_j)^2\right),
\end{eqnarray*}
and so we have 
\begin{eqnarray*}
\mathbb{E}\left(\|(\cX-\cX*\cQ*\cQ^T)\|_F^2\right)\leq\hspace{3cm}\\
\sum_{i_3=1}^{I_3}
\frac{1}{I_3}\left(1+\frac{R}{P-1}(\tau^{(i)}_R)^{2(2q-1)}\right)\left(\sum_{j>R}(\widehat{\sigma}^{(i)}_j)^2\right).
\end{eqnarray*}
It suffices to use the Holder's identity as follows 
\[
\mathbb{E}\left({\,\|\cX-\cX*\cQ*{\cQ}^T\|}_F\right)\leq\left(\mathbb{E}\left({\,\|\cX-\cX*\cQ*{\cQ}^T\|_F^2}\right)\right)^{1/2},
\]
to get the desired result.

\bibliographystyle{elsarticle-num} 
\bibliography{cas-refs}


\end{document}